\tikzset{
    circ/.style={circle, fill=black, inner sep=2pt, node contents={}}
}
\renewcommand*\env@matrix[1][c]{\hskip -\arraycolsep
  \let\@ifnextchar\new@ifnextchar
  \array{*\c@MaxMatrixCols #1}}
\newtheorem{theorem}{Theorem}
\numberwithin{theorem}{section}
\newtheorem{proposition}[theorem]{Proposition}
\newtheorem{lemma}[theorem]{Lemma}
\newtheorem{corollary}[theorem]{Corollary}
\theoremstyle{definition}
\newtheorem{remark}[theorem]{Remark}
\newtheorem{example}[theorem]{Example}
\newcommand\xqed[1]{%
  \leavevmode\unskip\penalty9999 \hbox{}\nobreak\hfill
  \quad\hbox{#1}}
\newcommand{\exampleend}{\xqed{$\triangle$}}
\numberwithin{equation}{section}
\crefname{equation}{}{}
\crefname{equation}{}{}
\crefname{figure}{Figure}{Figure}
\crefname{section}{Section}{Section}
\crefname{lemma}{Lemma}{Lemma}
\crefname{proposition}{Proposition}{Proposition}
\crefname{theorem}{Theorem}{Theorem}
\crefname{corollary}{Corollary}{Corollarie}
\crefname{definition}{Definition}{Definition}
\crefname{notation}{Notations}{Notation}
\crefname{remark}{Remark}{Remark}
\crefname{claim}{Claim}{Claim}
\crefname{assumption}{Assumption}{Assumption}
\newcommand{\realproj}{\mathbb R\mathrm P}
\DeclareMathOperator*{\mean}{\operatorname{\mathbb E}}
\newcommand{\R}{\mathbb{R}}
\renewcommand{\S}{\mathbb{S}}
\newcommand{\vol}{\operatorname{\mathrm{vol}}}
\newcommand{\bfu}{\mathbf{u}}
\newcommand{\bfv}{\mathbf{v}}
\newcommand{\bfx}{\mathbf{x}}
\newcommand{\bfy}{\mathbf{y}}
\newcommand{\bft}{\mathbf{t}}
\newcommand{\bfa}{\mathbf{a}}
\newcommand{\bfp}{\mathbf{p}}
\newcommand{\bfe}{\mathbf{e}}
\newcommand{\bfs}{\mathbf{s}}
\newcommand{\bfw}{\mathbf{w}}
\newcommand{\bfz}{\mathbf{z}}
\newcommand{\bfb}{\mathbf{b}}
\title{Average degree of the essential variety}
\author[Breiding]{Paul Breiding}
\address{Paul Breiding\\ Osnabr\"uck University, Germany}
\email{pbreiding@uni-osnabrueck.de}
\author[Fairchild]{Samantha Fairchild}
\address{Samantha Fairchild\\ MPI MiS, Germany}
\email{samantha.fairchild@mis.mpg.de}
\author[Santarsiero]{Pierpaola Santarsiero}
\address{
 Pierpaola Santarsiero\\
Osnabr\"uck University, Germany
}
\email{pierpaola.santarsiero@mis.mpg.de}
\author[Shehu]{Elima Shehu}
\address{Elima Shehu\\ MPI MiS and Osnabr\"uck University, Germany }
\email{elima.shehu@mis.mpg.de}
\date{}
\begin{document}

\maketitle

\begin{abstract}
The \emph{essential variety} is an algebraic subvariety of dimension $5$ in real projective space $\realproj^{8}$ which encodes the relative pose of two calibrated pinhole cameras. The $5$-point algorithm in computer vision computes the real points in the intersection of the essential variety with a linear space of codimension $5$. The degree of the essential variety is $10$, so this intersection consists of 10 complex points in general.

We compute the expected number of real intersection points when the linear space is random. We focus on two probability distributions for linear spaces. The first distribution is invariant under the action of the orthogonal group $\mathrm{O}(9)$ acting on linear spaces in $\realproj^{8}$. In this case, the expected number of real intersection points is equal to~$4$. The second distribution is motivated from computer vision and is defined by choosing 5 point correspondences in the image planes $\realproj^2\times \realproj^2$ uniformly at random. A Monte Carlo computation suggests that with high probability the expected value lies in the interval $(3.95 - 0.05,\ 3.95 + 0.05)$.

\vspace{.5cm}

\noindent\textbf{Keywords} 5 point relative pose problem, algebraic vision, random algebraic geometry, \hspace{.4cm}convex~geometry.

\end{abstract}

\section{Introduction}

The mathematical abstraction of a pinhole camera is a projective linear map
$$\realproj^3 \dashrightarrow \realproj^2, \quad \bfx\mapsto C\bfx,$$
where $C\in\mathbb R^{3\times 4}$ is a matrix of rank 3. The camera is called \emph{calibrated}, when $C=[R, \bft]$, where~$R\in\mathrm{SO}(3)$ is a rotation matrix and $\bft\in\mathbb R^3$ is a translation vector. 

The \emph{relative-pose problem} is the problem of computing the relative position of two cameras in 3-space; see \cite[Section 9]{hartley_zisserman_2004}.
Suppose that we have two calibrated cameras given by two matrices $C_1$ and~$C_2$ of rank 3. Since we are only interested in relative positions, we can assume $C_1=[\mathrm{1}_3, \mathbf{0}]$ and $C_2=[R, \bft]$. If $\bfx\in\realproj^3$ is a point in 3-space, $\bfu=C_1\bfx\in\realproj^2$ and~$\bfv=C_2\bfx\in\realproj^2$ are called a \emph{point-correspondence}. Any point-correspondence $(\bfu,\bfv)$ satisfies the algebraic equation
\begin{equation}\label{E_eq}
    \bfu^T E(R,\bft) \bfv = 0,\quad \text{ where } E(R,\bft) = [\bft]_\times \, R,
\end{equation}
and $[\bft]_\times$ is the matrix acting by $[\bft]_\times \bfx = \bft\times \bfx,$
the cross-product in $\mathbb R^3$. The set of all such matrices is denoted
$\widehat{\mathcal{E}} := \{E(R,\bft)\mid R \in \mathrm{SO}(3), \bft\in \R^3\}$. This is an algebraic variety defined by the 10 cubic and homogeneous polynomial equations $\det(E)=0,\; 2EE^TE - \mathrm{Tr}(EE^T)E=0$; see \cite[Section 4]{FM1990}.
Therefore, if $\pi: \R^{3\times 3} \mapsto \mathrm{P}(\mathbb{R}^{3\times 3})\cong \realproj^8$ denotes the projectivization map,~$\widehat{\mathcal{E}}$ is the cone over the projective variety
\begin{equation}\label{E_eq2}\mathcal{E} = \pi(\widehat{\mathcal{E}}),
\end{equation}
which is called the \emph{essential variety}. 

In the following we view elements in $\realproj^8$ as real $3\times 3$ matrices up to scaling.
The essential variety $\mathcal E$ is of dimension $5 = \dim \mathrm{SO}(3) + \dim \R^3 - 1$. Demazure showed that its complexification has degree $10$; see \cite[Theorem 6.4]{demazure:inria-00075672}. Denote by 
$\mathbb G:=G(3,\realproj^8)$
the Grassmannian of $3$-dimensional linear spaces in $\realproj^8$.
By \cref{E_eq}, every point correspondence induces a linear equation on $\mathcal E$. 
For 5 general point correspondences $(\bfu_1,\bfv_1),\ldots,(\bfu_5,\bfv_5)\in \realproj^2\times \realproj^2,$ the linear space $$L:=\{E\in \realproj^8 \mid \bfu_1^T E\bfv_1 = \cdots = \bfu_5^T E\bfv_5 = 0\}$$ is general in $\mathbb G$. Thus
$$\# (\mathcal E\cap L)\leq 10.$$
That is, the relative pose problem can be solved by computing the real zeros of a system of polynomial equations that has 10 complex zeros in general. 
Once we have computed $E=E(R,\bft)$ we can recover the relative position of the two cameras from $E$. The process of recovering the relative pose of two calibrated cameras from five point correspondences is known as the \emph{5-point algorithm}, {see \cite{N04}}.

The system of polynomial equations that we need to solve as part of the 5-point algorithm has 10 complex zeros in general, but the number of real zeros depends on $L$. Often, one computes all complex zeros and sorts out the real ones. Whether or not this is an efficient approach depends on how likely it is to have many real zeros out of 10 complex ones. Motivated by this observation, in this paper we study the \emph{average degree} $\mean \# (\mathcal E\cap L)$ for random $L$.

{Consider $L=U\cdot L_0$, where $L_0\in \mathbb G$ is fixed and ~$U\sim \mathrm{Unif}(\mathrm{O}(9))$ then with respect to Haar measure on $\mathbb G$ we in fact have $L\sim \mathrm{Unif}(\mathbb G)$; see \cite{ Kassel2019DeterminantalPM, PAUSINGER201913}.}
Our first result shows with this uniform distribution, we expect 4 of the 10 complex intersection points to be real.
\begin{theorem}\label{main1} 
{Let $L\sim \mathrm{Unif}(\mathbb G)$ then}$$\displaystyle\mean_{L\sim \mathrm{Unif}(\mathbb G)} \# (\mathcal E\cap L)=4.$$
\end{theorem}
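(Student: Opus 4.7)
The plan is to invoke the Poincar\'e kinematic formula for $\realproj^{n}$: for any $d$-dimensional smooth submanifold $M\subset\realproj^{n}$ and a uniform random projective $(n-d)$-plane $L$, one has
\[\mean\,\#(M\cap L)=\frac{\vol(M)}{\vol(\realproj^{d})},\]
with volumes taken in the Fubini--Study metric. Applied to $\mathcal E\subset\realproj^{8}$ with $d=5$, $n-d=3$, and using $\vol(\realproj^{5})=\pi^{3}/2$, this reduces \cref{main1} to showing $\vol(\mathcal E)=2\pi^{3}$.

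To compute $\vol(\mathcal E)$ I will use the parametrization
\[\psi:\mathrm{SO}(3)\times\S^{2}\to\S^{8}\subset\R^{3\times 3},\qquad (R,\bft)\mapsto \tfrac{1}{\sqrt{2}}[\bft]_{\times}R,\]
well-defined since $\|[\bft]_{\times}R\|_{F}=\sqrt{2}$ when $\|\bft\|=1$; composing with the quotient $\S^{8}\to\realproj^{8}$ gives a surjection onto $\mathcal E$. The classical identity $[Q\bft]_{\times}=Q[\bft]_{\times}Q^{\top}$ for $Q\in\mathrm{SO}(3)$ shows that $\psi$ is equivariant under the isometric action $(R,\bft)\mapsto(QRQ^{\top},Q\bft)$ on the domain and the isometric action $E\mapsto QEQ^{\top}$ on the codomain, and an analogous argument applies to right translation in $\mathrm{SO}(3)$. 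Since $\mathrm{SO}(3)$ acts transitively on $\S^{2}$, the pointwise Jacobian $|J\psi|$ is \emph{constant} on $\mathrm{SO}(3)\times\S^{2}$; hence $\vol(\mathcal E)$ is determined by a single Jacobian evaluation, scaled by $\vol(\mathrm{SO}(3))\cdot\vol(\S^{2})$ and divided by the generic degree of $\pi\circ\psi$.

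A short fiber analysis of $[\bft_{1}]_{\times}R_{1}=\pm[\bft_{2}]_{\times}R_{2}$---using that $R^{*}:=-(I-2\bft\bft^{\top})R$ lies in $\mathrm{SO}(3)$ and satisfies $[\bft]_{\times}R^{*}=-[\bft]_{\times}R$---yields, over a generic $E\in\mathcal E$, exactly four preimages $(R,\pm\bft),(R^{*},\pm\bft)$, recovering the classical 4-fold pose ambiguity. The pointwise Jacobian at $(I,e_{3})$ is a mechanical $5\times 5$ Gram determinant against the orthonormal basis $\{[e_{i}]_{\times}/\sqrt{2}\}_{i=1}^{3}\cup\{e_{1},e_{2}\}$; the Gram matrix decouples into two $2\times 2$ blocks plus a $1\times 1$ block, giving $|J\psi|=\sqrt{2}/16$. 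With the Frobenius-induced value $\vol(\mathrm{SO}(3))=16\sqrt{2}\,\pi^{2}$ and $\vol(\S^{2})=4\pi$,
\[\vol(\mathcal E)=\tfrac{1}{4}\cdot\tfrac{\sqrt{2}}{16}\cdot 16\sqrt{2}\,\pi^{2}\cdot 4\pi=2\pi^{3},\]
as required.

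The main obstacle I anticipate is justifying the degree computation: one must verify that no identifications exist beyond the explicit 4-fold symmetry, since a miscount would directly rescale $\vol(\mathcal E)$ and corrupt the final answer. The pointwise Jacobian and the volume of $\mathrm{SO}(3)$, while unavoidable, are essentially routine once the symmetry reduction and the degree are in hand.
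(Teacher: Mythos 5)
Your proposal is correct and follows essentially the same route as the paper's Proof 1 of \cref{lem:volEssential}: the integral-geometry formula reduces \cref{main1} to $\vol(\mathcal E)=2\pi^3$, which you compute from the $\mathrm{SO}(3)\times\S^2$ parametrization, with equivariance under the $\mathrm{SO}(3)\times\mathrm{SO}(3)$ isometries making the normal Jacobian constant and a single evaluation sufficing; your numbers (Jacobian $\sqrt2/16$ and $\vol(\mathrm{SO}(3))=16\sqrt2\,\pi^2$ in the Frobenius metric, generic degree $4$ onto $\realproj^8$) match the paper's bookkeeping ($\mathrm{NJ}=1/4$ and $8\pi^2$ in the half-trace metric, a $2{:}1$ map onto $\mathcal E_{\S}$ followed by the $2{:}1$ cover of $\realproj^8$) and yield the same $\vol(\mathcal E)=2\pi^3=4\vol(\realproj^5)$. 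The one step you flag as open --- that a generic fiber has exactly four preimages and no further identifications --- is precisely what the paper settles in \cref{lemma:image_essential}, where $[\bft]_\times(1_3-\lambda M)=0$ forces $1_3-\lambda M=c\,\bft\bft^T$ and hence $M=1_3$ or $M=2\bft\bft^T-1_3$.
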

This result is in fact quite surprising, because we get an integer, though there is no reason why it should even be a rational number (see also \cite[Remark 2]{BKL18}).

To work within the computer vision framework, we need a different distribution than used in \cref{main1}. The probability distribution is $\mathrm{O}(9)$-invariant, yet linear equations of the type $\bfu^TE\bfv=0$ are not $\mathrm{O}(9)$-invariant. These special linear equations are $\mathrm{O}(3)\times \mathrm{O}(3)$-invariant by the group action $(U,V).(\bfu,\bfv):=(U\bfu,V\bfv)$. The corresponding invariant probability distribution is given by the random point $\bfa=U\cdot \bfa_0\in\realproj^2$, where $U\sim \mathrm{Unif}(\mathrm{O}(3))$ and $\bfa_0\in\realproj^2$ is fixed. We denote this by $\bfa\sim\mathrm{Unif}(\realproj^2)$. 
\begin{remark}
The definition of $\mathrm{Unif}(\mathbb G)$ does not depend on the choice of $L_0$, and the definition of $\mathrm{Unif}(\realproj^2)$ does not depend on the choice of $\bfa_0$.
\end{remark}

We write $L\sim \psi$, where $L=\{E\in\realproj^8 \mid \bfu_1^T E\bfv_1 = \cdots = \bfu_5^T E\bfv_5 = 0\}\in \mathbb G$ is the random linear space given by i.i.d.\ points $\bfu_1,\bfv_1,\ldots,\bfu_5,\bfv_5\sim \mathrm{Unif}(\realproj^2)$. 
We have the following result.
\begin{theorem}\label{main2} With the distribution $\psi$ defined above,
$$\mean_{L\sim \psi} \# (\mathcal E\cap L)=\frac{\pi^3}{4} \cdot \mean  \left\vert\det \begin{bmatrix} \bfz_1 & \bfz_2 &\bfz_3&\bfz_4 & \bfz_5\end{bmatrix}\right\vert,$$
where 
$\bfz_1,\bfz_2,\bfz_3,\bfz_4, \bfz_5\sim \bfz$ are i.i.d., 
$$\bfz=
\begin{bmatrix}
b\cdot r\cdot \sin\theta, &
b\cdot r\cdot \cos \theta,&
a \cdot s \cdot \sin\theta,&
a \cdot s \cdot \cos\theta,&
rs
\end{bmatrix}^T\in\mathbb R^5$$
and $a,b,r,s\sim N(0,1)$, $\theta\sim \mathrm{Unif}([0,2\pi))$ are independent.
\end{theorem}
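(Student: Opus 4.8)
The idea is to pass to a Gaussian model, apply a Kac--Rice/integral-geometry formula, collapse the integral over $\mathcal E$ to a single point by equivariance, and then identify the surviving conditional expectation with $\mean\,|\det[\bfz_1\cdots\bfz_5]|$ through an explicit conditional law. First, since $\bfu\sim\mathrm{Unif}(\realproj^2)$ has the same law as $[\mathbf g]$ for $\mathbf g\sim N(0,\mathrm{1}_3)$ and $\bfu^TE\bfv=0\Leftrightarrow\mathbf g^TE\mathbf h=0$, the random $L$ has the same intersection numbers with $\mathcal E$ as $\{E:\langle \mathbf g_i\mathbf h_i^T,E\rangle=0,\ i=1,\dots,5\}$, where $\mathbf g_i,\mathbf h_i\sim N(0,\mathrm{1}_3)$ are i.i.d.\ and $\langle\cdot,\cdot\rangle$ is the Frobenius product. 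On the double cover $\widehat{\mathcal E}\subset\S^8$ put $f=(f_1,\dots,f_5)$ with $f_i(E)=\langle \mathbf g_i\mathbf h_i^T,E\rangle$, so that $\#(\mathcal E\cap L)=\tfrac12\#\{E\in\widehat{\mathcal E}:f(E)=0\}$.

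Second, apply the Kac--Rice formula: $\mean\#\{E\in\widehat{\mathcal E}:f(E)=0\}=\int_{\widehat{\mathcal E}}\rho_{f(E)}(0)\,\mean[\,|\det Df(E)|\mid f(E)=0\,]\,\mathrm dE$, where $Df(E)$ is expressed in an orthonormal frame of $T_E\widehat{\mathcal E}$ and $\rho_{f(E)}$ is the density of the random vector $f(E)\in\R^5$. The essential variety is a single orbit $\mathcal E=\{[UE_0V^T]:U,V\in\mathrm{SO}(3)\}$ with $E_0=\tfrac1{\sqrt2}\mathrm{diag}(1,1,0)$ and one-dimensional stabiliser, and $E\mapsto UEV^T$ acts equivariantly on everything in sight (the Gaussian law being $\mathrm{SO}(3)\times\mathrm{SO}(3)$-invariant), so the integrand is constant along $\widehat{\mathcal E}$; using $\vol(\widehat{\mathcal E})=2\vol(\mathcal E)$ and the factor $\tfrac12$,
\[
\mean_{L\sim\psi}\#(\mathcal E\cap L)=\vol(\mathcal E)\cdot\rho_{f(E_0)}(0)\cdot\mean[\,|\det Df(E_0)|\mid f(E_0)=0\,],
\]
with $\vol(\mathcal E)=2\pi^3$ (equivalent to \cref{main1} via the classical spherical integral-geometry formula). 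At $E_0$ one computes $T_{E_0}\widehat{\mathcal E}=\operatorname{span}(\tfrac1{\sqrt2}(E_{12}-E_{21}),E_{13},E_{23},E_{31},E_{32})$, so the $i$-th row of $Df(E_0)$ in this frame is $\hat w(\mathbf g_i,\mathbf h_i)=(\tfrac1{\sqrt2}(g_{i1}h_{i2}-g_{i2}h_{i1}),g_{i1}h_{i3},g_{i2}h_{i3},g_{i3}h_{i1},g_{i3}h_{i2})$, and the conditioning event is $g_{i1}h_{i1}+g_{i2}h_{i2}=0$ for all $i$; writing $g_1h_1=\tfrac14((g_1+h_1)^2-(g_1-h_1)^2)$ realises $g_1h_1+g_2h_2$ as $\tfrac12$ times a difference of two independent $\chi^2_2$ variables, whose Laplace density at $0$ yields $\rho_{f(E_0)}(0)=(1/\sqrt2)^5$.

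Third — the crux — compute the conditional law and match it to $\bfz$. Conditioning on $g_1h_1+g_2h_2=0$ forces $(g_1,g_2)\perp(h_1,h_2)$ in $\R^2$; parametrise the constraint hypersurface by $(g_1,g_2)=R_g(\cos\phi,\sin\phi)$, $(h_1,h_2)=\mu(-g_2,g_1)$, with $g_3,h_3$ free. A direct computation of the induced Riemannian volume shows its Jacobian exactly cancels the gradient factor $\|\nabla(g_1h_1+g_2h_2)\|$ in the disintegration, so that conditionally $\phi\sim\mathrm{Unif}[0,2\pi)$, $g_3,h_3\sim N(0,1)$, and $(R_g,\nu)$ with $\nu:=\mu R_g$ are independent, $R_g$ half-normal and $\nu\sim N(0,1)$. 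In these coordinates $\hat w(\mathbf g,\mathbf h)=(\tfrac1{\sqrt2}\nu R_g,\ h_3R_g\cos\phi,\ h_3R_g\sin\phi,\ -g_3\nu\sin\phi,\ g_3\nu\cos\phi)$. Writing $R_g=|X|$ with $X\sim N(0,1)$ and absorbing $\operatorname{sign}(X)$ into $g_3,h_3,\nu$ replaces the half-normal $R_g$ by a genuine $N(0,1)$ while preserving independence; comparison with the definition of $\bfz$ then shows that $\hat w(\mathbf g,\mathbf h)$, after a permutation of coordinates, an element of $\mathrm{O}(5)$, and the rescaling $\mathrm{diag}(1,1,1,1,\tfrac1{\sqrt2})$, has exactly the law of $\bfz$. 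These operations change $|\det|$ only by the factor $\tfrac1{\sqrt2}$, so the conditional expectation equals $\tfrac1{\sqrt2}\mean\,|\det[\bfz_1\cdots\bfz_5]|$, and collecting constants,
\[
\mean_{L\sim\psi}\#(\mathcal E\cap L)=2\pi^3\cdot(1/\sqrt2)^5\cdot(1/\sqrt2)\cdot\mean\,|\det[\bfz_1\cdots\bfz_5]|=\frac{\pi^3}{4}\,\mean\,|\det[\bfz_1\cdots\bfz_5]|.
\]

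The main obstacle is this last step: getting the induced volume form on the quadratic constraint hypersurface precisely right so that it cancels the coarea gradient factor, and then recognising the resulting five-parameter distribution — after the sign-absorption trick — as an orthogonal, rescaled copy of the law of $\bfz$. One must also justify Kac--Rice on the singular variety $\mathcal E$ (its singular locus has positive codimension, hence contributes nothing) and that $0$ is almost surely a regular value of $f$ on the smooth locus.
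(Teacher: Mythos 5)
Your proof is correct --- I checked the key computations: the orthonormal frame at $\tfrac1{\sqrt2}\mathrm{diag}(1,1,0)$, the row vector $\hat w$, the value $\rho_{f(E_0)}(0)=(1/\sqrt2)^5$ coming from the Laplace law of $g_1h_1+g_2h_2$, the cancellation of the surface-measure Jacobian $\sqrt{R_g^2+\nu^2}$ against $\Vert\nabla(g_1h_1+g_2h_2)\Vert$, the sign-absorption turning the half-normal $R_g$ into a genuine Gaussian, and the bookkeeping $2\pi^3\cdot 2^{-5/2}\cdot 2^{-1/2}=\pi^3/4$ --- but it is organized genuinely differently from the paper. The paper never invokes Kac--Rice or any conditioning: it introduces the incidence variety $\mathcal I$, proves smoothness of a Zariski-open part, applies the coarea formula to the two projections, and reduces everything to the normal-Jacobian ratio $\vert\det B\vert/\sqrt{\det(AA^T)}$ integrated over the fiber $Q^{\times 5}$ above the base point $E_0=E(\mathrm 1_3,\bfe_1)$, where the parametrization $(a,b,r,s,\theta)\mapsto(\bfu,\bfv)$ has Jacobian $\sqrt{r^2+s^2}$ cancelling $\sqrt{\det(AA^T)}$. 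Your Kac--Rice formulation packages the same reduction as (density at $0$) $\times$ (conditional expectation of $\vert\det Df(E_0)\vert$), so you need two extra ingredients the paper avoids --- the explicit Laplace density and the disintegration on the quadric $\{(g_1,g_2)\perp(h_1,h_2)\}$ --- while the paper's ratio of normal Jacobians absorbs both at once. What each buys: the paper's version carries an arbitrary density $\Psi$ through the whole computation and therefore yields the general formula of \cref{main3} (used in \cref{ex11}) for free, and its explicit smoothness argument for $\widetilde{\mathcal I}$ discharges the regular-value/genericity issues that you only defer to a closing remark; also note that $\mathcal E$ is smooth by \cref{cor_hom_space}, so your caution about a singular locus is unnecessary. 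Your version is shorter for the specific case $\Psi\equiv 1$ and makes the probabilistic structure (independent rows, Laplace density, conditional law realized as an orthogonal, rescaled copy of $\bfz$) more transparent; for a self-contained write-up you should still state the version of the Kac--Rice theorem you use and verify its hypotheses, which is essentially what the paper's incidence-variety argument does by hand.
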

We were not able to determine the exact value of the integral in this theorem. Yet, we can independently sample $N$ random matrices of the form $\begin{bmatrix} \bfz_1 & \bfz_2 &\bfz_3&\bfz_4 & \bfz_5\end{bmatrix}$ and compute their absolute determinants.  This gives an empirical average value $\mu_N$. An experiment with sample size $N=5\cdot 10^9$ gives an empirical average of 
$$\mu_N \approx  3.95$$
In fact, $\mu_N$ is itself a random variable and we have $P\left(\ \vert \mu_N - \mean_{L\sim \psi} \# (\mathcal E\cap L)\vert \geq \varepsilon\ \right) \leq \frac{\pi^6}{16}\cdot \frac{\sigma^2}{N\cdot \varepsilon^2}$ by Chebychev's inequality, where $\sigma^2$ is the variance of the absolute determinant. We show in \cref{prop_var} below that $\sigma^2\leq 360$. Using this in Chebychev's inequality we get
$$P\left(\ \vert \mu_N - \mean_{L\sim \psi} \# (\mathcal E\cap L)\vert \geq 0.05\ \right) \leq  0.0175\%$$ (in fact, since $360$ is an extremely coarse upper bound, the true probability should be much smaller). Therefore, it is likely that~$\mean_{L\sim \psi} \# (\mathcal E\cap L)$ is strictly smaller than 4; i.e., it is likely that the expected value in \cref{main2} is less than the one in \cref{main1}. See \cref{fig:exp}.

The distribution of zeros shown in \cref{fig:exp} gives rise to further questions of interest in computer vision. When applying the 5-point algorithm it is important to know when there are no real solutions. In \cref{fig:exp}, for 1000 sampled spaces, the distribution with respect to $\mathrm{Unif}(\mathbb{G})$ had 10 instances with no real solutions, and the distribution with $\psi$ had only 1 instance with no real solutions. The experiments give an indication that no real solutions is a relatively rare occurrence, but further work will need to be done to quantify and geometrically characterize these occurrences with respect to different distributions.

We remark that the distributions $\mathrm{Unif}(\mathbb G)$ and $\psi$ are different in the following sense. For $L\sim \mathrm{Unif}(\mathbb G)$ every linear space~$L\in \mathbb G$ has the same probability. But when $L\sim \psi$, it must be defined by $5$ linear equations that are given by rank-one matrices of size $3$. The Segre variety of rank-one matrices of size $3$ in~$\realproj^8$ has dimension~4 (see\footnote{In \cite{Landsberg2012} one can find a formula for the dimension of the complex Segre variety. The real Segre variety is Zariski dense in the complex Segre variety, so their real and complex dimensions coincide.} \cite[Section 4.3.5]{Landsberg2012}), so that a general linear space of codimension $4=9-5$ in~$\realproj^8$, spanned by 5 general $3\times 3$ matrices, intersects the Segre variety in finitely many points. There is an Euclidean open subset in~$\mathbb G$, such that this intersection has strictly less than 5 points. Hence, there is a measurable subset~$\mathcal W\subset \mathbb G$ such that $P_{L\sim \mathrm{Unif}(\mathbb G)}(L\in \mathcal W)>0$ but~$P_{L\sim \psi}(L\in \mathcal W)=0$.

In \cref{sec_zonoid} we use a result by Vitale \cite{Vitale} to express the expected value in \cref{main2} through the volume of a certain convex body $K\subset \R^5$. Namely, 
\begin{equation}\label{main2_K}
\mean_{L\sim \psi} \# (\mathcal E\cap L) = 30\pi^2 \cdot \mathrm{vol}(K),
\end{equation}
and $K$ defined by its support function 
$h_K(\bfx)= \tfrac{1}{2}\mean_{\bfz} \vert \bfx^T\bfz\vert$, and $\bfz\in\R^5$ is as above; $K$ is a zonoid and we call it the \emph{essential zonoid}. We use this to prove a lower bound for the expected number of real points~$\mean_{L\sim \psi} \# (\mathcal E\cap L)$ in \cref{thm: lower bound}.

The two probability distributions in \cref{main1} and \cref{main2} are \emph{geometric}, meaning that they are not biased towards preferred points in $\mathbb G$ or $\realproj^2$, respectively.  In applications, however, one might be interested in other distributions, like for instance taking the $\bfu_i$ and~$\bfv_i$ uniformly in a box (see Examples \ref{ex1} and \ref{ex11} below). For such a case, we do not get concrete results like \cref{main1} or \cref{main2}. Nevertheless, in \cref{main3} below we give a general integral formula.

\begin{figure}[ht]
\includegraphics[width = 0.49\textwidth]{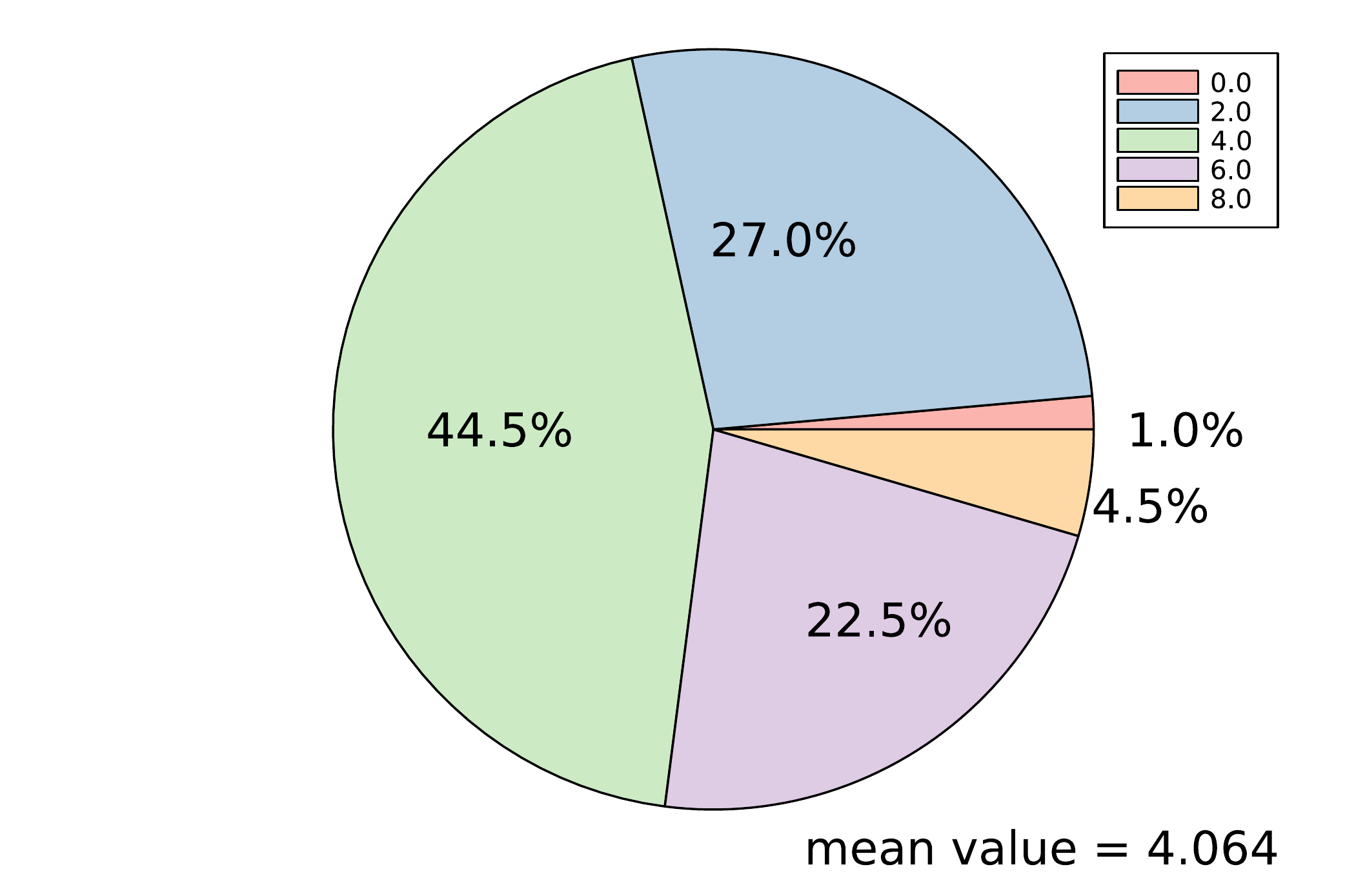}
\hfill
\includegraphics[width = 0.49\textwidth]{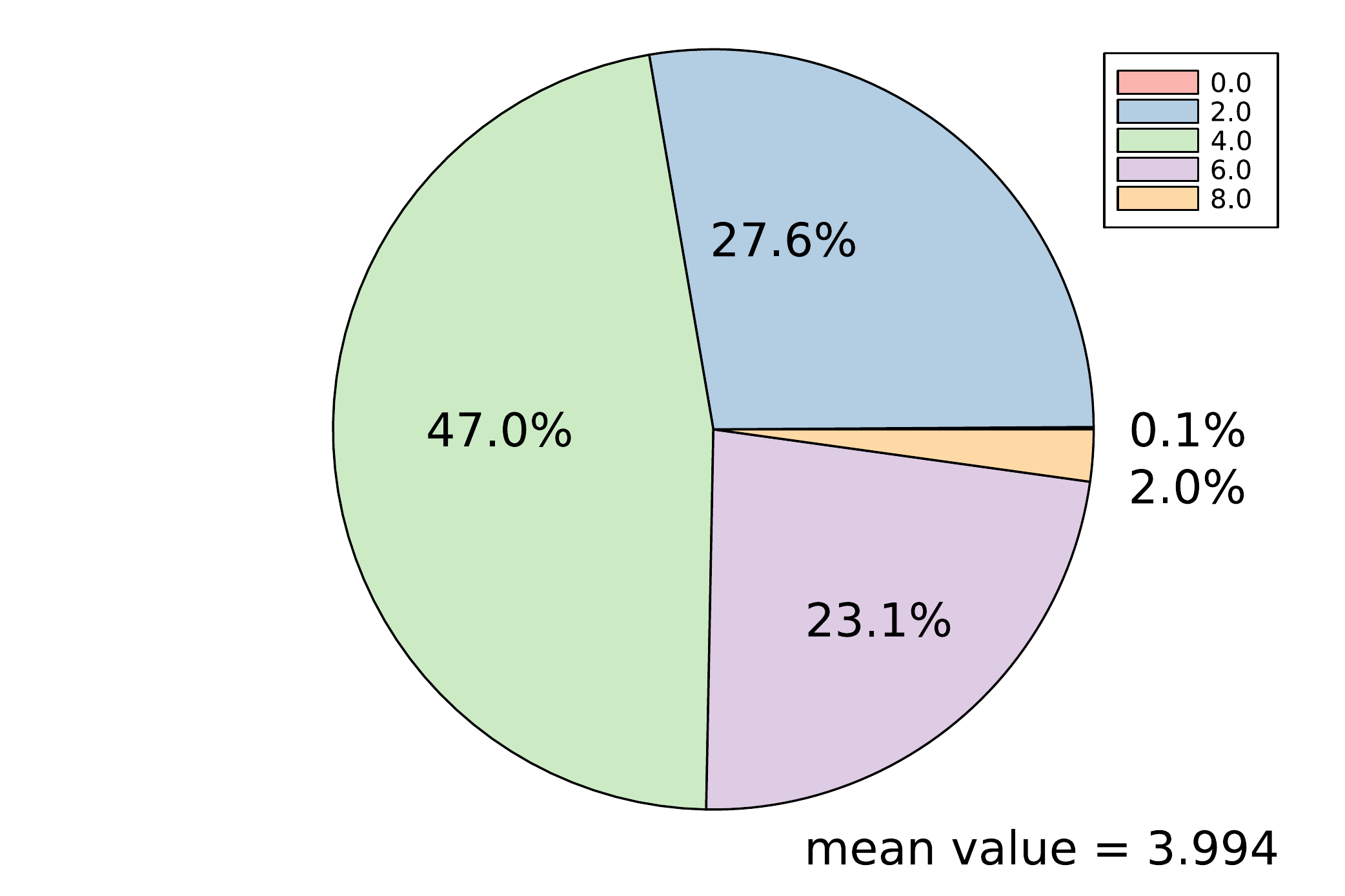}
\caption{The two pie charts show the outcome of the following two experiments. We sampled $N=1000$ random linear spaces, once with distribution $\mathrm{Unif}(\mathbb G)$ (the left chart) and once with distribution $\psi$ (the right chart). Then, we computed $\mathcal E\cap L$ by solving the system of polynomial equations with the software \texttt{HomotopyContinuation.jl} \cite{HC.jl}. The charts show the empirical distribution of real zeros and the corresponding empirical means in these experiments.}
\label{fig:exp}
\end{figure}

\subsection*{Outline} 
In \cref{sec:preliminaries} we give preliminaries. We recall the integral geometry formula in projective space and study the geometry of the essential variety. In \cref{sec:volume} we prove \cref{main1} by computing the volume of the essential variety. In \cref{sec:proof_main2} we prove \cref{main2} and \cref{main3}. In the last section, \cref{sec_zonoid}, we study the essential zonoid.

\medskip

\section{Preliminaries}\label{sec:preliminaries}
Let us start by setting up our notation as well as making note of many key volume computations used throughout the paper. We consider the Euclidean space $\mathbb R^n$ with the standard metric $\langle\bfx, \bfy\rangle = \bfx^T\bfy$. The norm of a vector $\bfx\in\mathbb R^3$ will be denoted by $\Vert \bfx\Vert:=\sqrt{\langle \bfx,\bfx\rangle}$ and the unit sphere by $\S^{n-1}:=\{\bfx\in\mathbb R^n \mid \Vert \bfx \Vert = 1\}$.  
The Euclidean volume of the sphere is 
\begin{equation}\label{volume_sphere}
\vol(\S^n) = \frac{2 \pi^{\frac{n+1}{2}}}{\Gamma\left(\frac{n+1}{2}\right)}.\end{equation}
In particular $\vol(\S^1) =2\pi$ and  $\vol(\S^2) = 4\pi$. The standard basis vectors in $\mathbb{R}^n$ are denoted~$\bfe_i$ for~$1\leq i\leq n$.
The space of real $n\times n$ matrices $\mathbb R^{n\times n}$ is also endowed with a Euclidean structure
$$\langle A,B\rangle := \frac{1}{2}\, \mathrm{Tr}(AB^T),\quad A,B\in\mathbb R^{n\times n}.$$
We denote the identity matrix $\mathrm{1}_n\in\mathbb R^{n\times n}$ {and the zero matrix $0_n$.} The orthogonal group will be denoted by $\mathrm{O}(n)$, while the special orthogonal group is
$\mathrm{SO}(n)$.
Both the orthogonal and special orthogonal group are Riemannian submanifolds of $\mathbb R^{n\times n}$.
Volumes of the two manifolds are  $$\vol(\mathrm{O}(n)) = 2\prod_{k=1}^{n-1} \mathrm{vol}(\S^k) \quad\hbox{ and } \quad \vol(\mathrm{SO}(n)) = \frac{1}{2}\vol(\mathrm{O}(n));$$
see \cite[Equation (3-15)]{Howard}. For instance, $\mathrm{vol}(\mathrm{SO}(2)) = 2\pi$ and $\mathrm{vol}(\mathrm{SO}(3)) = 8\pi^2$.

\subsection{Integral geometry}

The \emph{real projective space}  of dimension $n-1$ is defined to be
$\realproj^{n-1} := (\mathbb R^{n}\setminus \{0\})/\sim$, where the equivalence relation is $\bfx\sim \bfy \Leftrightarrow \exists \lambda \in\mathbb R: \bfx=\lambda \bfy$. The projection $\pi:\S^{n-1} \to \realproj^{n-1}$ that maps $\bfx$ to its class is a $2:1$ cover. It induces a Riemannian structure on $\realproj^{n-1}$ by declaring $\pi$ to be a local isometry.

Let now $X\subseteq \realproj^{n-1}$ be a submanifold of dimension $m$ and $L\subseteq \realproj^{n-1}$ be a linear space of codimension $m$. Howard \cite{Howard} proved that for almost all $U\in \mathrm{O}(n)$ we have that $X\cap U \cdot L$ is finite and
\begin{equation}\label{IG_formula}
\mean_{U\sim \mathrm{Unif}(\mathrm{O}(n))} \vol(X\cap U\cdot L)  = \frac{\vol(X)}{\vol(\realproj^{m})};
\end{equation}
see \cite[Theorem 3.8 \& Corollary 3.9]{Howard}.
This formula will be used for proving \cref{main1}.

\subsection{The coarea formula}
The proof of \cref{IG_formula} is based on the coarea formula, which we will also need. In order to state the formula we need to introduce the normal Jacobian. Let $M, N$ be Riemannian manifolds with $\dim(M)\geq \dim(N)$ and let $F\colon M\rightarrow N$ be a surjective smooth map. Fix a point $\bfx\in M$. The \emph{normal Jacobian} $\mathrm{NJ}(F,\bfx)$ of $F$ at $\bfx$ is 
$$\mathrm{NJ}(F,\bfx)= \sqrt{\det JJ^T},$$
where $J$ is the matrix representation of the derivative $\mathrm D_\bfx F$ relative to orthonormal bases in $T_\bfx M$ and $T_{F(\bfx)}N$. Then for any integrable function $h:M\to \R$ 
\begin{equation}\label{coarea_formula}\int_M h(\bfx) \,\mathrm d\bfx = \int_{\bfy\in N} \left(\int_{\bfx\in F^{-1}(\bfy)} \frac{h(\bfx)}{\mathrm{NJ}(F,\bfx)} \,\mathrm d\bfx\right)\,\mathrm d\bfy.
\end{equation}
See, e.g., \cite[Section A-2]{Howard}.

\subsection{The geometry of the essential variety}

In this subsection, we study in more detail the geometry of the essential variety $\mathcal E$. Recall from \cref{E_eq2} that $\mathcal E$ is the projection of the cone~$\widehat{\mathcal E}$ to projective space $\realproj^8$. We can also project $\widehat{\mathcal E}$ to the sphere. This defines the \emph{spherical essential variety}
$$\mathcal E_{\mathbb S} := \{E\in \widehat{\mathcal E} \mid \Vert E\Vert = 1\}.$$
Recall from \cref{E_eq} the definition of $E(R, \bft)$.

\begin{lemma}\label{lemma:image_essential}
The map $E:  \mathrm{SO}(3)\times  \S^2 \to \R^{3\times 3}, (R,\bft)\mapsto E(R,\bft)$ is 2:1 and $\operatorname{im}(E) =\mathcal E_{\mathbb S}$.
\end{lemma}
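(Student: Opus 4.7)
The plan is to split the statement into the image claim and the $2{:}1$ claim, and to prove both by direct computation with the skew-symmetric cross-product matrix. For $\operatorname{im}(E)=\mathcal{E}_{\mathbb{S}}$, I would use the standard identity $[\bft]_\times^2 = \bft\bft^T - \Vert\bft\Vert^2\mathrm{1}_3$ together with $RR^T=\mathrm{1}_3$ and $[\bft]_\times^T = -[\bft]_\times$ to compute
\[
E(R,\bft)\,E(R,\bft)^T \;=\; [\bft]_\times[\bft]_\times^T \;=\; -[\bft]_\times^2 \;=\; \Vert\bft\Vert^2\,\mathrm{1}_3 - \bft\bft^T.
\]
Taking traces and using $\Vert A\Vert^2=\tfrac12\mathrm{Tr}(AA^T)$ yields $\Vert E(R,\bft)\Vert^2 = \Vert\bft\Vert^2$. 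For $\bft\in\S^2$ this places $E(R,\bft)\in\mathcal{E}_{\mathbb{S}}$; conversely every $E\in\mathcal{E}_{\mathbb{S}}\subset\widehat{\mathcal{E}}$ can be written as $[\bft']_\times R'$ with $\bft'\in\R^3$, and the same norm computation forces $\Vert\bft'\Vert=1$, i.e.\ $\bft'\in\S^2$.

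For the $2{:}1$ property I would first show that $\bft$ is determined up to sign, then that $R$ is uniquely determined by $E$ and $\bft$. The first step uses $\bft^T E(R,\bft) = (\bft\times\bft)^T R = 0$, so $\bft$ lies in the left nullspace of $E$; since $\mathrm{rank}\,[\bft]_\times = 2$ for $\bft\neq 0$ and $R$ is invertible, this nullspace is one-dimensional, so among unit vectors $\bft$ is determined up to sign. That the opposite sign is actually realized follows by exhibiting an explicit twisted pair: setting $H_\bft := 2\bft\bft^T - \mathrm{1}_3$ (the $180^\circ$ rotation about the axis $\bft$, which lies in $\mathrm{SO}(3)$), the identity $[\bft]_\times\bft=0$ gives $[\bft]_\times H_\bft = -[\bft]_\times$, whence $E(H_\bft R, -\bft) = -[\bft]_\times H_\bft R = [\bft]_\times R = E(R,\bft)$.

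The remaining step, and the main technical point, is to show that for each fixed choice of sign of $\bft$ the rotation $R\in\mathrm{SO}(3)$ solving $[\bft]_\times R = E$ is unique. If $R_1,R_2$ are two such solutions, then the columns of $R_1-R_2$ lie in $\ker[\bft]_\times = \mathrm{span}(\bft)$, so $R_1 R_2^T = \mathrm{1}_3 + \bft\bfw^T$ for some $\bfw\in\R^3$. Expanding $(\mathrm{1}_3+\bft\bfw^T)(\mathrm{1}_3+\bfw\bft^T)=\mathrm{1}_3$ gives $\bft\bfw^T + \bfw\bft^T + \Vert\bfw\Vert^2\,\bft\bft^T = 0$; applying this to $\bft$ forces $\bfw=\lambda\bft$ for some scalar $\lambda$, and substituting back reduces to $\lambda(\lambda+2)=0$. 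The case $\lambda=-2$ gives $R_1 R_2^T = \mathrm{1}_3 - 2\bft\bft^T$ with determinant $-1$, contradicting $R_1 R_2^T\in\mathrm{SO}(3)$; hence $\lambda=0$ and $R_1=R_2$. Combined with the twist pair above, this produces exactly two preimages.
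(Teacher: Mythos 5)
Your proof is correct and takes essentially the same route as the paper: the norm computation $\Vert E(R,\bft)\Vert^2=\Vert\bft\Vert^2$ for the image, the twisted-pair rotation $2\bft\bft^T-\mathrm{1}_3$ for the second preimage, and a rank-one-perturbation-of-the-identity argument with the determinant condition to rule out further preimages. The only difference is organizational: you make explicit two steps the paper leaves implicit, namely that the one-dimensional left nullspace of $E$ forces the translation to be $\pm\bft$, and that orthogonality forces the perturbation $\bft\bfw^T$ to have $\bfw$ proportional to $\bft$.
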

\begin{proof}
Let $(R,\bft)\in \mathrm{SO}(3)\times  \S^2 $. The matrix description of $[\bft]_\times$ is 
$$[\bft]_\times = \begin{bmatrix} 0 & -t_3 & t_2 \\ t_3 & 0 & -t_1 \\ -t_2 & t_1 & 0\end{bmatrix}.$$
In particular, this shows $\mathrm{Tr}\left([\bft]_\times [\bft]_\times^{T}\right) = 2\Vert \bft\Vert^2 = 2$.
Then, the norm squared of $E(R,\bft)$ is 
$$\Vert E(R,\bft)\Vert^2 
= \frac{1}{2}\,\mathrm{Tr}\left(\ E(R,\bft)E(R,\bft)^T\ \right) 
= \frac{1}{2}\,\mathrm{Tr}\left(\ [\bft]_\times R R^T [\bft]_\times^T\ \right) 
= \frac{1}{2}\,\mathrm{Tr}\left(\ [\bft]_\times [\bft]_\times^{T}\ \right) 
= 1.
$$
Therefore, $\operatorname{im}(E) =\mathcal E_{\mathbb S}$. 
Let $M\in\mathrm{SO}(3)$ be a matrix such that $M\bft = \bft$ and $M\bfx =-\bfx$ for all~$\bfx$ orthogonal to $\bft$, then we~have $M[-\bft]_\times = [\bft]_\times$ and we can write the following 
\begin{equation}\label{eq20}E(R,\bft) = [\bft]_\times R =[\bft]_\times M^TM R=(M[-\bft]_\times)^T MR= [-\bft]_\times MR =E(MR, -\bft). 
\end{equation}
This means that $E$ is {at least} 2:1. {To show it is at most $2:1$, 
we consider the following
\[
[\bft]_\times R = [\lambda \bft]_\times MR,
\]
for some rotation $M$ and $\lambda \in \{\pm 1\}$. We want to check how many different rotation matrices  $M$ satisfy this equation. We have the following chain of implications \[
[\bft]_\times R = [\lambda \bft]_\times MR\,\, \Longrightarrow \,\,[\bft]_\times = [\lambda \bft]_\times M
\,\,\Longrightarrow\,\, [\bft]_\times (1_3 - \lambda M) = 0.
\] We see that the columns of $1_3 - \lambda M$ are multiples of $\bft,$ therefore we can write $1_3 - \lambda M = c\,\bft\bft^T$ for some $c \in \mathbb{R}$. We make use of the fact that $\det(M)=1.$ Firstly we compute the determinant
\[
\det(M)=\det(\lambda^{-1}(1_3-c\bft\bft^T)) = \lambda^{-3} \det(1_3-c\,\bft\bft^T) = \lambda^{-3}(1-c) ,
\]
where we have used that $\bft^T\bft=1$. This implies
$\lambda^{3} = 1 - c$. If $\lambda = 1$, then $c=0$. If $\lambda = -1$, then we have $c = 2$. Thus, either $M = 1_3$ or $M = 2\bft\bft^T - 1_3$.

This is Rodrigues’ formula for 180-degree rotation about the axis spanned by $\bft$. Additionally, it is worth mentioning that this symmetry of the essential variety is exactly the \textit{twisted pair}, described in \cite{hartley_zisserman_2004}. } \end{proof}

Next, we show the invariance properties of the map $E$. For $U,V\in\mathrm{SO}(3)$ we denote
$$(U,V).E := U\, E\, V^T.$$
In particular, the next lemma shows that this defines a group action on $\mathcal E_{\S}$.
\begin{lemma}\label{lemma:invariance_phi}
For orthogonal matrices $U,V\in\mathrm{SO}(3)$ and $(R,\bft)\in \mathrm{SO}(3)\times \S^2$  we have
$$E(URV^T, U\bft) = (U,V).E(R,\bft).$$
\end{lemma}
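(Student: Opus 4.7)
The plan is to reduce the statement to the single conjugation identity
$$[U\bft]_\times = U\,[\bft]_\times\, U^T, \qquad U\in \mathrm{SO}(3),\ \bft\in\R^3,$$
and then finish with a one-line substitution into the definition of $E$.

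First I would establish the conjugation identity. The cleanest approach is to note that $[\bfa]_\times$ is characterized as the unique skew-symmetric matrix acting by $\bfx\mapsto \bfa\times \bfx$, and then apply both $[U\bft]_\times$ and $U[\bft]_\times U^T$ to an arbitrary $\bfx \in \R^3$. The required equality $(U\bft)\times \bfx = U(\bft\times U^T\bfx)$ is precisely the $\mathrm{SO}(3)$-equivariance of the cross product on $\R^3$, which holds because orthogonal matrices of determinant $+1$ preserve both inner products and orientation (this is also the adjoint action of $\mathrm{SO}(3)$ on its Lie algebra $\mathfrak{so}(3)$). Alternatively, one could verify it coordinate-wise from the explicit matrix expression of $[\bft]_\times$ given in the proof of \cref{lemma:image_essential}, but the conceptual route is shorter.

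Second I would substitute into the definition of $E(R,\bft) = [\bft]_\times R$ and use $U^TU = \mathrm{1}_3$ to collapse a middle factor:
$$E(URV^T, U\bft) = [U\bft]_\times\, U R V^T = U\,[\bft]_\times U^T U R V^T = U\,[\bft]_\times R\, V^T = U\,E(R,\bft)\,V^T,$$
which is exactly $(U,V).E(R,\bft)$ by definition.

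The main (and essentially only) nontrivial content is the conjugation identity for $[\cdot]_\times$; everything else is bookkeeping. As a sanity check, the lemma should extend verbatim if $U,V$ are merely assumed to lie in $\mathrm{SO}(3)$ acting on $\mathcal{E}_{\mathbb S}$, which is consistent with the claim that $(U,V).E$ defines a group action on the spherical essential variety.
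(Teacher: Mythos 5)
Your proof is correct and follows essentially the same route as the paper: the paper likewise reduces the claim to the $\mathrm{SO}(3)$-equivariance of the cross product, $(U\bft)\times(U\bfx)=U(\bft\times\bfx)$, which is just your conjugation identity $[U\bft]_\times=U[\bft]_\times U^T$ in a slightly different form, and then concludes by the same substitution into $E(R,\bft)=[\bft]_\times R$. No gaps; the argument is complete.
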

\begin{proof}
We have $E(URV^T, U\bft)  = [U\bft]_\times URV^T =  ([U\bft]_\times UR)V^T$. Moreover, the cross product satisfies $(U\bft)\times (U\bfx)=U(\bft \times \bfx)$ for all $\bfx\in\mathbb R^3$. 
\end{proof}
With the above lemma, we deduce the following result on $\mathcal{E}_{\S}$.
\begin{corollary}\label{cor_hom_space}
$\mathcal E_{\mathbb S}$ is a homogeneous space for $\mathrm{SO}(3)\times\mathrm{SO}(3)$ acting by left and right multiplication. In particular,
$\mathcal E_{\mathbb S}$, and hence also $\mathcal E$, is smooth.
\end{corollary}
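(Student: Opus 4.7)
The plan is to deduce both homogeneity and smoothness directly from the two preceding lemmas. By \cref{lemma:invariance_phi}, for any $(R,\bft)\in \mathrm{SO}(3)\times \S^2$ and any $(U,V)\in \mathrm{SO}(3)\times\mathrm{SO}(3)$ one has $(U,V).E(R,\bft)=E(URV^T,U\bft)$, and by \cref{lemma:image_essential} the right-hand side lies in $\mathcal E_{\S}$. Hence left-right multiplication by $\mathrm{SO}(3)\times\mathrm{SO}(3)$ restricts to a well-defined action on $\mathcal E_{\S}$.

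For transitivity, I would take two points $E_1=E(R_1,\bft_1)$ and $E_2=E(R_2,\bft_2)$ in $\mathcal E_{\S}$. Since $\mathrm{SO}(3)$ acts transitively on $\S^2$, pick $U\in\mathrm{SO}(3)$ with $U\bft_1=\bft_2$, and set $V:=R_2^T U R_1\in\mathrm{SO}(3)$. A one-line computation gives $UR_1V^T=UR_1 R_1^T U^T R_2=R_2$, so \cref{lemma:invariance_phi} yields $(U,V).E_1=E(R_2,\bft_2)=E_2$. Thus the action is transitive and $\mathcal E_{\S}$ is a homogeneous space; in particular it is diffeomorphic to $(\mathrm{SO}(3)\times\mathrm{SO}(3))/H$ for the stabilizer $H$ of any chosen base point, and hence is a smooth submanifold of $\R^{3\times 3}$.

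To pass from $\mathcal E_{\S}$ to the projective variety $\mathcal E$, I would observe that $-E(R,\bft)=[-\bft]_\times R=E(R,-\bft)$, so the antipodal involution on $\S^8$ restricts to $\mathcal E_{\S}$. This involution is free on $\mathcal E_{\S}$ since $\Vert E\Vert=1$ forces $E\neq -E$, and $\mathcal E$ is precisely the quotient of $\mathcal E_{\S}$ by this free $\mathbb Z/2$-action, so it inherits a smooth manifold structure. I do not anticipate a genuine obstacle here: the only checks are that the constructed $V$ really lies in $\mathrm{SO}(3)$ (immediate, as it is a product of elements of $\mathrm{SO}(3)$ and their transposes) and that the antipodal map respects the parametrization by $(R,\bft)$, which follows from $[-\bft]_\times=-[\bft]_\times$.
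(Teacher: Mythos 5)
Your proposal is correct and follows essentially the same route as the paper, which deduces the corollary directly from \cref{lemma:invariance_phi} together with the surjectivity statement of \cref{lemma:image_essential}: the action preserves $\mathcal E_{\S}$ and is transitive because $\mathrm{SO}(3)$ acts transitively on $\S^2$ and on itself, so $\mathcal E_{\S}$ is an orbit of a compact group and hence smooth. You merely spell out details the paper leaves implicit (the explicit pair $(U,V)$ witnessing transitivity and the passage to $\mathcal E$ via the antipodal $2{:}1$ cover), and these details are all correct.
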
 
We now denote the following special matrix in $\mathcal E$:
\begin{equation}\label{def_E_0}
E_0 := E(\mathrm{1}_3, \bfe_1) 
= \begin{bmatrix}
    0 & 0 & 0\\
    0 & 0 & -1\\
    0 & 1 & 0\\
\end{bmatrix}
\end{equation}
(recall that $\bfe_1$ denotes the first standard basis vector $(1,0,0)^T$).

\begin{lemma}\label{lem_stabilizer}
The stabilizer group of $E\in\mathcal E_{\S}$ under the $\mathrm{SO}(3)\times\mathrm{SO}(3)$ action has volume equal to~$2\sqrt{2} \cdot \mathrm{vol}(\mathrm{SO}(2))$.
\end{lemma}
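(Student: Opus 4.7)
The plan is to leverage homogeneity: by \cref{cor_hom_space}, $\mathcal E_{\mathbb S}$ is a homogeneous space for $\mathrm{SO}(3)\times\mathrm{SO}(3)$, so all point stabilizers are conjugate subgroups and share a common volume. Therefore it suffices to compute the stabilizer of the specific matrix $E_0 = E(\mathrm{1}_3, \bfe_1)$ from \cref{def_E_0}.

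To describe this stabilizer explicitly, I would combine \cref{lemma:invariance_phi}, which gives $(U,V).E_0 = E(UV^T, U\bfe_1)$, with the $2$-to-$1$ structure of the map $E$ established in \cref{lemma:image_essential}. The two preimages of $E_0$ under $E$ are $(\mathrm{1}_3, \bfe_1)$ and $(M, -\bfe_1)$ with $M = 2\bfe_1\bfe_1^T - \mathrm{1}_3 = \mathrm{diag}(1,-1,-1) \in \mathrm{SO}(3)$, so the stabilizer decomposes as a disjoint union $G_0 \sqcup G_1$ with
\[
G_0 = \{(U,U) : U\bfe_1 = \bfe_1\}, \qquad G_1 = \{(U, MU) : U\bfe_1 = -\bfe_1\}
\]
(in the second case, $UV^T = M$ together with $M^T = M$ forces $V = MU$). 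Each piece is a one-dimensional submanifold of $\mathrm{SO}(3)\times\mathrm{SO}(3)$, consistent with the dimension count $\dim(\mathrm{SO}(3)\times\mathrm{SO}(3)) - \dim \mathcal E_{\mathbb S} = 6-5 = 1$.

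Finally I would compute the two one-dimensional volumes. For either sign, the set $\{U \in \mathrm{SO}(3) : U\bfe_1 = \pm\bfe_1\}$ is a coset of $\mathrm{Stab}_{\mathrm{SO}(3)}(\bfe_1) \cong \mathrm{SO}(2)$, hence has volume $\mathrm{vol}(\mathrm{SO}(2))$ by bi-invariance of Haar measure on $\mathrm{SO}(3)$. The embeddings $U \mapsto (U, U)$ and $U \mapsto (U, MU)$ into $\mathrm{SO}(3) \times \mathrm{SO}(3)$ with the product metric are both isometries scaled by $\sqrt{2}$: a tangent vector $\dot U$ is sent to a vector of squared norm $\|\dot U\|^2 + \|\dot U\|^2 = 2\|\dot U\|^2$, respectively $\|\dot U\|^2 + \|M\dot U\|^2 = 2\|\dot U\|^2$, where I use that left multiplication by $M\in\mathrm{SO}(3)$ is an isometry of $\mathbb R^{3\times 3}$ with the trace inner product. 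Thus $\mathrm{vol}(G_0) = \mathrm{vol}(G_1) = \sqrt 2\cdot \mathrm{vol}(\mathrm{SO}(2))$, summing to $2\sqrt 2\cdot \mathrm{vol}(\mathrm{SO}(2))$. The only delicate bookkeeping is the $\sqrt 2$ factor coming from the product Riemannian structure; the rest is routine coset counting in $\mathrm{SO}(3)$.
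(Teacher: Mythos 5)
Your proof is correct and follows essentially the same route as the paper: reduce to $E_0$ by homogeneity, use the $2{:}1$ fiber of $E$ over $E_0$ (namely $(\mathrm{1}_3,\bfe_1)$ and $(M,-\bfe_1)$ with $M=\mathrm{diag}(1,-1,-1)$) to identify the stabilizer as the two circles $\{(U,U):U\bfe_1=\bfe_1\}$ and $\{(U,MU):U\bfe_1=-\bfe_1\}$, and account for the factor $\sqrt{2}$ coming from the diagonal-type embedding into the product metric. The paper packages that last step as the normal Jacobian $\sqrt{2}$ of an explicit parametrization $\mathrm{SO}(2)\times\{-1,1\}\to\mathrm{SO}(3)\times\mathrm{SO}(3)$ combined with the coarea formula, which is exactly the length-scaling you compute directly.
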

\begin{proof}
The stabilizer groups of $E\in\mathcal E_{\S}$ all have the same volume. We compute the stabilizer group of $E_0$. By \cref{lemma:image_essential},  $E(R,\bft)$ is 2:1 and by \cref{eq20} we have
$$E_0 = E(\mathbf 1_3, \bfe_1) = E(M, -\bfe_1),$$
where $M=\left[\begin{smallmatrix} 1 & 0 & 0 \\ 0 & -1 & 0 \\  0 & 0&-1\end{smallmatrix}\right]$.
Therefore, $(U,V).E_0 = E_0$ if and only if~$U\bfe_1 = \bfe_1$ and $UV^T = 1_3$, or $U\bfe_1 = -\bfe_1$ and $UV^T = M$; i.e., $MU=V$. That is, $\mathrm{stab}(E_0)$ is realized as the image of the map
${F: \mathrm{SO}(2)\times\{-1,1\}\rightarrow\mathrm{SO}(3)\times\mathrm{SO}(3)}$ {such that}
$$(\tilde{U},\varepsilon)\mapsto\left(\begin{bmatrix}
    \varepsilon & 0 & 0\\ 0 &  u_{11} & u_{12}\\ 0 &  u_{21} & u_{22}
\end{bmatrix},\begin{bmatrix}
    \varepsilon & 0 & 0\\ 0 & \varepsilon u_{11} & \varepsilon u_{12}\\ 0 & \varepsilon u_{21} &\varepsilon u_{22}
\end{bmatrix}\right), \mbox{ where } \tilde{U}=\begin{bmatrix} u_{11} & u_{12}\\  u_{21} & u_{22} \end{bmatrix} \in \mathrm{SO}(2),\,\varepsilon\in\{-1,1\}.$$  
{ The normal Jacobian of $F$ at every point is  $\sqrt{2}.$ For fixed $\varepsilon$, $\mathrm{SO}(2)\times \{\varepsilon\}$ is a homogeneous space under the action of $\mathrm{SO}(2)$ acting on itself. This group action is transitive and preserves the inner product, so the normal Jacobian is constant. Thus it suffices to compute the normal Jacobian at $(1_2,\varepsilon)$. To see this, the tangent space to $\mathrm{SO}(3)$ at the identity is $$T_{\mathrm{1}_3}\mathrm{SO}(3)=\mathrm{span}\{F_{1,2}, F_{1,3}, F_{2,3}\}$$ for $F_{i,j}=\bfe_i\bfe_j^T - \bfe_j\bfe_i^T$. Thus an orthogonal basis for the tangent space of~$\mathrm{SO}(3)\times \mathrm{SO}(3)$ at $(\mathrm 1_3,\mathrm 1_3)$, is given by \begin{equation} \label{eq:SO3basis}
\{(0_3, F_{1,2}), (0_3, F_{1,3}), (0_3, F_{2,3}), (F_{1,2},0_3), (F_{1,3},0_3), (F_{2,3},0_3)\}.
\end{equation}
Indeed, with respect to this basis and identifying the tangent space of $\mathrm{SO}(2) \times \{-1,1\}$ with $\R$, we have $D_{(\mathrm{1}_2,
\varepsilon)}F = \begin{bmatrix} 0&0&\varepsilon&0&0&1\end{bmatrix}^T$ and thus 
$$NJ(F, (\mathrm{1}_2,1)) = \left\|\begin{bmatrix} 0&0&\varepsilon&0&0&1\end{bmatrix}^T\right\|= \sqrt{2}.$$ 

We conclude by using the coarea formula \cref{coarea_formula} for $M= \mathrm{SO}(2) \times \{-1,1\},$ $N= \mathrm{stab}(E_0)$, $h \equiv  \sqrt{2}$, and $F^{-1}(y)$ a single point by injectivity to obtain $\mathrm{vol}(\mathrm{stab}(E_0)) = 2\sqrt{2} \cdot \mathrm{vol}(\mathrm{SO}(2)).$
}
\end{proof}

Next, we compute an orthonormal basis of the tangent space $T_{E_0}\mathcal{E}$ at $E_0$.
\begin{lemma}\label{prop_TS}
An orthonormal basis of $T_{E_0}\mathcal E$ is given by the following five matrices
\begin{alignat*}{3}
&B_1 = \begin{bmatrix}
    0 & 0 & 0\\
    0 & 0 & 0\\
    \sqrt{2} & 0 & 0\\
\end{bmatrix},\quad
&&B_2 = \begin{bmatrix}
    0 & 0 & 0\\
    \sqrt{2} & 0 & 0\\
    0 & 0 & 0\\
\end{bmatrix},\quad && B_3 =\begin{bmatrix}
    0 & 0 & \sqrt{2}\\
    0 & 0 & 0\\
    0 & 0 & 0\\
\end{bmatrix},
\\[0.7em]
 &B_4 = \begin{bmatrix}
    0 & \sqrt{2} & 0\\
    0 & 0 & 0\\
    0 & 0 & 0\\
\end{bmatrix},\quad
&&B_5 = \begin{bmatrix}
    0 & 0 & 0\\
    0 & 1 & 0\\
    0 & 0 & 1\\
\end{bmatrix}{.}
&&
\end{alignat*}
\end{lemma}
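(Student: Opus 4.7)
The plan is to differentiate the parametrization $E:\mathrm{SO}(3)\times \mathbb{S}^2 \to \mathcal{E}_{\mathbb{S}}$ from \cref{lemma:image_essential} at the point $(\mathbf{1}_3,\bfe_1)$, identify the image of the differential, and then check that the five matrices $B_1,\ldots,B_5$ lie in this image and are pairwise orthonormal with respect to $\langle A,B\rangle = \tfrac12\mathrm{Tr}(AB^T)$. Since $E_0$ has unit norm, $T_{E_0}\mathcal{E}$ can be identified with $T_{E_0}\mathcal{E}_{\mathbb{S}}$, and by \cref{cor_hom_space} the latter is 5-dimensional, so five orthonormal vectors that lie in it automatically form a basis.

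First I would write the tangent spaces concretely: $T_{\mathbf{1}_3}\mathrm{SO}(3)$ is the space of skew-symmetric matrices, spanned by the $F_{i,j}=\bfe_i\bfe_j^T-\bfe_j\bfe_i^T$ from \cref{eq:SO3basis}, and $T_{\bfe_1}\mathbb{S}^2 = \bfe_1^\perp = \mathrm{span}\{\bfe_2,\bfe_3\}$. Next I would differentiate $(R,\bft)\mapsto [\bft]_\times R$ to get
$$D_{(\mathbf{1}_3,\bfe_1)} E\,(X,\bfs) = [\bfs]_\times + [\bfe_1]_\times X$$
for $X$ skew-symmetric and $\bfs\in \bfe_1^\perp$. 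Plugging in $\bfs = s_2\bfe_2+s_3\bfe_3$ and a general skew-symmetric $X$ with entries $x_1,x_2,x_3$, a direct matrix computation shows the image is exactly the set of $3\times 3$ matrices whose $(1,1)$, $(2,3)$ and $(3,2)$ entries vanish and whose $(2,2)$ and $(3,3)$ entries coincide, with the other five entries free.

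From this explicit description, it is immediate that each $B_i$ lies in the image: $B_1,\ldots,B_4$ are rescaled coordinate matrices at positions $(3,1),(2,1),(1,3),(1,2)$, and $B_5$ has $(2,2)=(3,3)=1$, matching the constraint. Orthonormality with respect to $\tfrac12\mathrm{Tr}(AB^T)$ is a direct check: $B_1,\ldots,B_4$ have supports contained in singletons, and their non-zero entry equals $\sqrt{2}$, giving $\|B_i\|^2 = \tfrac12\cdot 2 = 1$; the support of $B_5$ is disjoint from all the others, and $\|B_5\|^2 = \tfrac12(1+1) = 1$. Since the supports of the five matrices are pairwise disjoint, all cross inner products vanish.

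I do not expect any real obstacle here beyond bookkeeping of signs and indices; in particular, because the image of $DE$ is automatically contained in $T_{E_0}\mathbb{S}^8 = E_0^\perp$, I do not need to separately verify $\langle B_i,E_0\rangle=0$ (though this is also visible from disjoint supports). The only mildly delicate point is justifying that the image of $DE$ at $(\mathbf{1}_3,\bfe_1)$ is all of $T_{E_0}\mathcal{E}_{\mathbb{S}}$, which follows because $E$ is 2:1 onto a 5-dimensional manifold and the image of the differential is already 5-dimensional by the explicit parametrization.
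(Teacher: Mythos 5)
Your proposal is correct and follows essentially the same route as the paper: differentiate the parametrization $E(R,\bft)=[\bft]_\times R$ at $(\mathrm{1}_3,\bfe_1)$, observe that the resulting tangent vectors lie in $T_{E_0}\mathcal E=T_{E_0}\mathcal E_{\S}$, verify the $B_i$ belong to this span, and conclude by orthonormality together with $\dim\mathcal E=5$. The only cosmetic difference is that you describe the image of the differential in closed form (entries $(1,1),(2,3),(3,2)$ zero and $(2,2)=(3,3)$), whereas the paper lists five explicit tangent matrices and writes each $B_i$ as a linear combination of them; your description is correct, and note that only the containment of the image in $T_{E_0}\mathcal E_{\S}$ is actually needed, not its surjectivity.
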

\begin{proof} 
First, we observe that the five matrices above are pairwise orthogonal and all of norm one. Since $\dim \mathcal E=5$, it therefore suffices to show that $B_1,\ldots,B_5\in T_{E_0}\mathcal E =  T_{E_0}\mathcal E_{\S}$. 
The derivatives of $E$ evaluated in $(\mathrm{1}_3, \dot \bft)$ and $(\dot R, \bfe_1)$ respectively are
\begin{align*}
    \frac{\partial E}{\partial \bft}(\mathrm{1}_3,\dot{\bft}) &=E(\mathrm{1}_3, \dot{\bft}),\quad
\frac{\partial E}{\partial R}(\dot R, \bfe_1)= E(\dot R, \bfe_1).
\end{align*}
We have $T_{\bfe_1} \S^2 = \mathrm{span}\{\bfe_2, \bfe_3\}$ and $T_{\mathrm{1}_3}\mathrm{SO}(3)=\mathrm{span}\{F_{1,2}, F_{1,3}, F_{2,3}\}$, where $F_{i,j}=\bfe_i\bfe_j^T - \bfe_j\bfe_i^T$ as above. Therefore, the following five matrices are in $T_{E_0}\mathcal E$:
\begin{alignat*}{2}
E(\mathrm{1}_3, \bfe_2)=&\begin{bmatrix}
    0 & 0 & 1\\
    0 & 0 & 0\\
    -1 & 0 & 0\\
\end{bmatrix}, \quad E(\mathrm{1}_3,\bfe_3) =&&\begin{bmatrix}
    0 & -1 & 0\\
    1 & 0 & 0\\
    0 & 0 & 0\\
\end{bmatrix}{,}\\[0.5em]\nonumber
E(F_{1,2}, \bfe_1) =&
\begin{bmatrix}
    0 & 0 & 0\\
    0 & 0 & 0\\
    -1 & 0 & 0\\
\end{bmatrix}, \quad E(F_{1,3}, \bfe_1) =&&
\begin{bmatrix}
    0 & 0 & 0\\
    1 & 0 & 0\\
    0 & 0 & 0\\
\end{bmatrix}, \quad E(F_{2,3}, \bfe_1) = \begin{bmatrix}
    0 & 0 & 0\\
    0 & 1 & 0\\
    0 & 0 & 1\\
\end{bmatrix}.
\end{alignat*}
Each of the $B_i$ above can be expressed as a linear combination of these five matrices, which shows $B_i\in T_{E_0}\mathcal E$.
\end{proof}
Alternatively, to prove \cref{prop_TS} we consider the derivative of the smooth surjective map $\gamma: \mathrm{SO}(3)\times \mathrm{SO}(3) \to \mathcal{E}_{\S}, (U, V)\mapsto (U, V).E_0$. Since the basis for the tangent space of $\mathrm{SO}(3)\times \mathrm{SO}(3)$ at $(\mathrm 1_3,\mathrm 1_3)$ is given as in \cref{eq:SO3basis}, the tangent space $T_{E_0}\mathcal E$ is also spanned by the following six matrices

\begin{alignat}{2} \label{matrices2}
    E_0F_{1,2}^T=&\begin{bmatrix}
        0 & 0 & 0\\
        0 & 0 & 0\\
        1 & 0 & 0\\
    \end{bmatrix}, \quad E_0F_{1,3}^T =&& \begin{bmatrix}
        0 & 0 & 0\\
        -1 & 0 & 0\\
        0 & 0 & 0\\
    \end{bmatrix},\quad E_0F_{2,3}^T =
    \begin{bmatrix}
        0 & 0 & 0\\
        0 & -1 & 0\\
        0 & 0 & -1\\
    \end{bmatrix},\\[0.5em]\nonumber
    F_{1,2}E_0 =&
    \begin{bmatrix}
        0 & 0 & -1\\
        0 & 0 & 0\\
        0 & 0 & 0\\
    \end{bmatrix}, \quad   F_{1,3}E_0 =&&
    \begin{bmatrix}
        0 & 1 & 0\\
        0 & 0 & 0\\
        0 & 0 & 0\\
    \end{bmatrix}, \quad    F_{2,3}E_0 = \begin{bmatrix}
        0 & 0 & 0\\
        0 & 1 & 0\\
        0 & 0 & 1\\
    \end{bmatrix}.
    \end{alignat}
\medskip

\section{The volume of the essential variety}\label{sec:volume}
In this section, we prove \cref{main1}. The strategy is as follows. By \cref{cor_hom_space}, $\mathcal E$ is a smooth submanifold of $\realproj^8$. We can apply the integral geometry formula \cref{IG_formula} to get 
\begin{equation}\label{IG_in_our_case}
\mean_{L\sim \mathrm{Unif}(\mathbb G)} \vol(\mathcal E\cap L)  = \frac{\vol(\mathcal E)}{\vol(\realproj^{5})}.
\end{equation}
Thus, to prove \cref{main1} we can compute the volume of $\mathcal E$. We do this in the next theorem. Notice that the result of the theorem, when plugged into \cref{IG_in_our_case} immediately, proves \cref{main1}.

\begin{theorem}\label{lem:volEssential} The volume of the essential variety is
    $$\vol(\mathcal{E})= 4 \cdot  \mathrm{vol}(\realproj^5). $$
\end{theorem}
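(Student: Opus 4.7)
\medskip

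\noindent\textbf{Proof plan.} The plan is to compute $\vol(\mathcal{E}_{\mathbb{S}})$ via the coarea formula applied to the parametrization $E:\mathrm{SO}(3)\times\mathbb{S}^2\to \mathcal{E}_{\mathbb{S}}$, and then convert to $\vol(\mathcal E)$ using the fact that the projection $\pi:\mathcal{E}_{\mathbb{S}}\to\mathcal{E}$ is a $2{:}1$ local isometry (by the definition of the Riemannian structure on $\realproj^8$), so $\vol(\mathcal{E})=\tfrac{1}{2}\vol(\mathcal{E}_{\mathbb{S}})$.

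By \cref{lemma:image_essential}, $E$ is a smooth surjection between manifolds of the same dimension $5$, and is $2{:}1$. Applying the coarea formula \cref{coarea_formula} to $E$ with $h\equiv \mathrm{NJ}(E,\,\cdot\,)$ gives
\[
\int_{\mathrm{SO}(3)\times \mathbb{S}^2} \mathrm{NJ}(E,(R,\bft))\,\mathrm d(R,\bft) \;=\; 2\,\vol(\mathcal{E}_{\mathbb{S}}).
\]
Now, by \cref{lemma:invariance_phi}, $E$ is equivariant with respect to the isometric actions of $\mathrm{SO}(3)\times\mathrm{SO}(3)$ on source and target, and these actions are transitive (on the source directly, and on the target by \cref{cor_hom_space}). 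Hence the differential of $E$ at any point is obtained from its differential at $(\mathrm{1}_3,\bfe_1)$ by pre- and post-composition with linear isometries, and so $\mathrm{NJ}(E,(R,\bft))$ is constant. It therefore suffices to evaluate it at $(\mathrm{1}_3,\bfe_1)$, where $E(\mathrm{1}_3,\bfe_1)=E_0$.

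At this point the differential is $(\dot R,\dot \bft)\mapsto [\bfe_1]_\times \dot R + [\dot\bft]_\times$. Using the orthonormal bases $(F_{1,2},F_{1,3},F_{2,3})$ of $T_{\mathrm 1_3}\mathrm{SO}(3)$ and $(\bfe_2,\bfe_3)$ of $T_{\bfe_1}\mathbb{S}^2$ on the source, together with the orthonormal basis $(B_1,\ldots,B_5)$ of $T_{E_0}\mathcal{E}$ from \cref{prop_TS} on the target, the five image vectors are precisely those computed in the proof of that lemma. Expressing each as a combination of the $B_i$ yields the $5\times 5$ matrix
\[
J \;=\; \begin{bmatrix}
-\tfrac{1}{\sqrt2} & 0 & 0 & -\tfrac{1}{\sqrt2} & 0\\[2pt]
0 & \tfrac{1}{\sqrt2} & 0 & 0 & \tfrac{1}{\sqrt2}\\[2pt]
0 & 0 & 0 & \tfrac{1}{\sqrt2} & 0\\[2pt]
0 & 0 & 0 & 0 & -\tfrac{1}{\sqrt2}\\[2pt]
0 & 0 & 1 & 0 & 0
\end{bmatrix},
\]
whose determinant, after Laplace expansion along the last row, equals $\tfrac14$. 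Thus $\mathrm{NJ}(E,(\mathrm 1_3,\bfe_1))=\tfrac14$ everywhere.

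Plugging in $\vol(\mathrm{SO}(3))=8\pi^2$ and $\vol(\mathbb{S}^2)=4\pi$ yields
\[
\tfrac14\cdot 8\pi^2\cdot 4\pi \;=\; 2\,\vol(\mathcal{E}_{\mathbb{S}}),
\]
so $\vol(\mathcal{E}_{\mathbb{S}})=4\pi^3$ and $\vol(\mathcal{E})=2\pi^3$. Since $\vol(\realproj^5)=\tfrac12 \vol(\mathbb{S}^5)=\tfrac{\pi^3}{2}$ by \cref{volume_sphere}, we obtain $\vol(\mathcal{E})=4\vol(\realproj^5)$, as desired. The main obstacle is the bookkeeping in the last step: one must verify carefully that the bases chosen are genuinely orthonormal (including the factor $\tfrac12$ in the Frobenius inner product on $\R^{3\times 3}$, which explains the $\sqrt 2$'s appearing in both $F_{i,j}$ and $B_i$) so that $|\det J|$ really is the normal Jacobian rather than a coordinate-dependent determinant.
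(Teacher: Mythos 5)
Your argument is correct and coincides with the paper's first proof: you compute $\vol(\mathcal{E}_{\S})$ via the coarea formula over the $2{:}1$ parametrization $E:\mathrm{SO}(3)\times\S^2\to\mathcal{E}_{\S}$, use equivariance to reduce the normal Jacobian to the point $(\mathrm{1}_3,\bfe_1)$, and your $5\times 5$ matrix is the paper's $J$ up to a permutation of columns, giving the same value $\tfrac14$ and hence $\vol(\mathcal{E})=2\pi^3=4\vol(\realproj^5)$. (Only a cosmetic remark: the $F_{i,j}$ have no explicit $\sqrt2$ entries; it is their unit norm under $\langle A,B\rangle=\tfrac12\mathrm{Tr}(AB^T)$ that matters, exactly as you use it.)
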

We give two different proofs of this theorem.
Since $\vol(\mathcal{E})=\tfrac{1}{2}\,\vol(\mathcal{E}_{\S})$, it is enough to compute the latter volume.

\begin{proof}[Proof 1]
By Lemma~\ref{lemma:image_essential}, we realize $\mathcal{E}_\S$ as the image of the smooth map
$(R,\bft)\mapsto E(R,\bft)$, and we now restrict the domain to the image. By Lemma \ref{lemma:invariance_phi}, $\mathrm{NJ} (E, (R,\bft))$ is invariant under the action by $\mathrm{SO}(3)\times\mathrm{SO}(3)$. 
Applying the coarea formula \cref{coarea_formula} over the 2-element fibers of $E$, we get that
$$\vol(\mathcal{E}_{\S}) = \int_{\mathcal E_{\mathbb S}} 1\; \mathrm d E = \frac{1}{2}
\int_{\mathrm{SO}(3)\times \S^2}\mathrm{NJ} (E, (R,\bft)) \; \mathrm d(R,\bft).$$ This implies
 \begin{align*} \label{eq:allbutJ}
   \vol(\mathcal{E}_{\S}) = & \frac{1}{2} \vol(\mathrm{SO}(3)) \cdot \vol(\S^2) \cdot \mathrm{NJ} (E, (\mathrm 1_3,\bfe_1))= 16\pi^3 \cdot \mathrm{NJ} (E, (\mathrm 1_3,\bfe_1)).
   \end{align*}
Recall, $F_{i,j}=\bfe_i\bfe_j^T - \bfe_j\bfe_i^T$.
With respect to the orthonormal basis $\{B_i\}$ computed in Lemma~\ref{prop_TS} and the orthonormal basis $\{(0_3,\bfe_2), (0_3,{\bfe_3}), ( F_{1,2}, {\bf 0}), (F_{1,3},{\bf0} ), ( F_{2,3},{\bf0})\}$ computed for~$ T_{\mathrm 1_3}\mathrm{SO}(3)\times T_{\bfe_1}\S^2$,  the columns of the matrix $J$ associated to the derivative of $E$ at $(\mathrm 1_3,\bfe_1)$ are the basis elements of $ T_{\mathrm 1_3}\mathrm{SO}(3)\times T_{\bfe_1}\S^2$ written as a combination of the basis given by Lemma~\ref{prop_TS}:
\begin{align*}
J 
&=  \frac{1}{\sqrt{2}}\begin{bmatrix}
-1 & 0 & -1 & 0 & 0\\
0 & 1 & 0 & 1 & 0\\
1 & 0 & 0 & 0 & 0\\
0 & -1 & 0 & 0 & 0\\
0 & 0 & 0 & 0 & \sqrt{2}
\end{bmatrix}.
\end{align*}
So, we have that $\mathrm{NJ} (E, (\mathrm 1_3,\bfe_1)) = \sqrt{\det JJ^T} = \frac{1}{4}$, and consequently $\vol(\mathcal{E}_{\S}) = 4\pi^3$. Therefore, we have $\vol(\mathcal{E}) = 2\pi^3$. By \cref{volume_sphere}, $\mathrm{vol}(\realproj^5) = \frac{1}{2}\cdot \mathrm{vol}(\S^5) = \frac{\pi^3}{2}$, so $\mathrm{vol}(\mathcal E) = 4 \cdot \mathrm{vol}(\realproj^5)$.
\end{proof}

\begin{proof}[Proof 2]
By \cref{cor_hom_space}, $\mathcal{E}_{\S}$ is a homogeneous space under the action of $\mathrm{SO}(3)\times \mathrm{SO}(3)$. We therefore have the surjective smooth map $\gamma: \mathrm{SO}(3)\times \mathrm{SO}(3) \to \mathcal{E}_{\S}, (U,V)\mapsto (U,V).E_0$ with fibers that satisfy $\mathrm{vol}(\gamma^{-1}(E)) = 2\sqrt{2}\cdot \mathrm{vol}(\mathrm{SO}(2))$ for all $E\in\mathcal E_{\S}$; see \cref{lem_stabilizer}. The coarea formula from \cref{coarea_formula} implies
$$\mathrm{vol}(\mathcal{E}_{\S})\cdot 2\sqrt{2}\cdot \mathrm{vol}(\mathrm{SO}(2)) = \int_{\mathrm{SO}(3)\times \mathrm{SO}(3)} \mathrm{NJ}(\gamma, (U,V))\; \mathrm d(U,V).$$
By \cref{lemma:invariance_phi}, the map $\gamma$ is equivariant with respect to the $\mathrm{SO}(3)\times \mathrm{SO}(3)$ action. This implies, that the value of the normal Jacobian does not depend on $(U,V)$. Therefore, we have 
$\mathrm{vol}(\mathcal{E}_{\S})\cdot2\sqrt{2}\cdot \mathrm{vol}(\mathrm{SO}(2)) =  \mathrm{NJ}(\gamma, (\mathrm 1_3,\mathrm 1_3)) \cdot \mathrm{vol}(\mathrm{SO}(3))^2,$ and
so 
$$\mathrm{vol}(\mathcal{E}_{\S}) =  \frac{\mathrm{vol}(\mathrm{SO}(3))^2}{2\sqrt{2}\cdot\mathrm{vol}(\mathrm{SO}(2))} \cdot \mathrm{NJ}(\gamma, (\mathrm 1_3,\mathrm 1_3)) = \frac{16\pi^3}{\sqrt{2}}\cdot \mathrm{NJ}(\gamma, (\mathrm 1_3,\mathrm 1_3)).$$
We compute the normal Jacobian. Recall the notation $F_{i,j}=\bfe_i\bfe_j^T - \bfe_j\bfe_i^T$. 

With respect to the orthonormal basis computed in Lemma~\ref{prop_TS} and the orthonormal basis as in \cref{eq:SO3basis} for the tangent space of $\mathrm{SO}(3)\times \mathrm{SO}(3)$ at $(\mathrm 1_3,\mathrm 1_3)$, the columns of the matrix $J$ associated to the derivative of $\gamma$ at $(\mathrm 1_3,\mathrm 1_3)$ are given by writing the matrices in \cref{matrices2} with respect to the basis in Lemma~\ref{prop_TS}:
\begin{align*}J 
&= \frac{1}{\sqrt{2}}\begin{bmatrix}
1&0&0&0&0&0\\
0&-1&0&0&0&0\\
0&0&0&-1&0&0\\
0&0&0&0&1&0\\
0&0&-\sqrt{2}&0&0&\sqrt{2}
\end{bmatrix}.
\end{align*}
Taking determinant we obtain $ \mathrm{NJ}(\gamma, (\mathrm 1_3,\mathrm 1_3)) = \sqrt{\det JJ^T} = \frac{1}{\sqrt{8}}.$ We get $\mathrm{vol}(\mathcal E_{\S}) = 4\pi^3.$ As above, this implies $\mathrm{vol}(\mathcal E) = 4 \cdot \mathrm{vol}(\realproj^5)$.
\end{proof}

Another important notion in the context of relative pose problems in computer vision is the so-called \emph{fundamental matrix}; see, e.g., \cite[Section 9]{hartley_zisserman_2004}. While essential matrices encode the relative pose of calibrated cameras, fundamental matrices encode the relative position between uncalibrated cameras. Fundamental matrices are {precisely} the matrices of rank two. So, similar to \cref{lem:volEssential}, the average degree of {fundamental} matrices is given by the normalized volume of the manifold of rank two matrices $\mathcal F \subset \realproj^8$. The volume was computed by Beltr\'an in \cite{Beltr2009}:
$\mathrm{vol}(\mathcal F) = \frac{\pi^4}{6} =  2\cdot \mathrm{vol}(\realproj^7).$ Notice that $\dim \mathcal F =7$.
We get
$$
    \mean_{L\sim \mathrm{Unif}(\mathbb G)} \vol(\mathcal F \cap L)  = \frac{\vol(\mathcal F)}{\vol(\realproj^{7})} = 2.
$$
(here, $L=U\cdot L_0, U\sim \mathrm{Unif}(\mathrm O(9))$, is a random uniform line in $\realproj^8$).

Thus, the average degree of the manifold of fundamental matrices is 2, while the degree of its complexification is 3.

\medskip

\section{Average number of relative poses}\label{sec:proof_main2}
In this section we prove \cref{main2}. 
Let $\Psi:(\realproj^2)^{\times 10}\to\R$ be a measurable function and denote  
$\bfp := (\bfu_1,\bfv_1,\ldots,\bfu_5,\bfv_5)\in(\realproj^2)^{\times 10},$ {where $(\realproj^2)^{\times 10}$, represents taking the cartesian product of $(\realproj^2)$ with itself $10$ times.} 
We consider the {following} expected value {for the number of real solutions to the relative pose problem}
$$\mu:=\mean\limits_{\bfu_1,\bfv_1,\ldots, \bfu_5,\bfv_5\sim \mathrm{Unif}(\realproj^2) \text{ i.i.d.}} \,\Psi(\bfp) \cdot \#\{E\in\mathcal E \mid \bfu_{1}^T E \bfv_{1}=\cdots =\bfu_{5}^T E \bfv_{5}=0\}.$$ For $\Psi(\bfp)=1$, the constant one function, $\mu=\mean_{L\sim \psi} \# (\mathcal E\cap L)$. In the general case, $\mu$ is the expected value of $\# (\mathcal E\cap L)$ for a probability distribution with probability density $\Psi(\bfp)$.

\smallskip

\begin{example}\label{ex1}
We regard $\R^2$ as a subset of $\realproj^2$ by using the embedding $\phi:\R^2\to\realproj^2$ such that $ \bfu:=\phi(\bfy)=[\bfy : 1]$.
Consider the case when $\bfy\in\R^2$ is chosen uniformly in the box $B:=[a,b]\times[c,d]\subset \R^2$. We compute the probability density of $\bfu$ relative to the uniform measure on $\realproj^2$.
The probability density of $\bfy$ relative to the Lebesgue measure in $\mathbb{R}^2$ is $\frac{1}{(b-a)(d-c)}\cdot \delta_B(\bfy)$, where $\delta_B(\bfy)$ is the indicator function of the box $B$. 
Let $W\subset \phi(B)$ be a measurable subset, then ${P}(\bfu\in W)={P}(\bfy\in \phi^{-1}(W))=\int_{\phi^{-1}(W)}\frac{1}{(b-a)(d-c)}\; \mathrm{d}\bfy $. Using the coarea formula \cref{coarea_formula} we express the probability of $W$ as
$$
{P}(\bfu\in W) =\int_{W} \frac{1}{(b-a)(d-c)}\cdot \frac{1}{\mathrm{NJ}(\phi,\bfy)} \; \mathrm{d}\bfu.
$$
Therefore, the probability density of $\bfu$ is $\left((b-a)(d-c)\cdot \mathrm{NJ}(\phi,\bfy)\right)^{-1}$.
Let us compute the normal Jacobian of the map $\phi$. Since we can work locally, we compute the derivative of the map $ \bfy \mapsto \bfs:= \frac{1}{\sqrt{y_1^2+y_2^2+1}}(y_1,y_2,1) \in \S^2$. The derivative of this map relative to the standard basis in $\mathbb{R}^2$ and $\mathbb{R}^3$ is expressed by the matrix $$
\frac{1}{\sqrt{y_1^2+y_2^2+1}} \begin{bmatrix}
1 & 0  \\
0 & 1\\
0 & 0
\end{bmatrix} + \left(\frac{\partial}{\partial y_1} \frac{1}{\sqrt{y_1^2+y_2^2+1}}  \right)\begin{bmatrix}
y_1 & 0 \\
y_2 &0 \\
1 & 0
\end{bmatrix} + \left(\frac{\partial}{\partial y_2} \frac{1}{\sqrt{y_1^2+y_2^2+1}}  \right)\begin{bmatrix}
0 & y_1 \\
0& y_2  \\
0 &1 
\end{bmatrix}. 
$$
The tangent space of the sphere is $T_{\bfs}\S^2=\bfs^\perp$. Let $P_{\bfs}=\mathrm{1}_3-\bfs \bfs^T$ be the projection onto $\bfs^\perp$. To get the derivative relative to an orthonormal basis of $\bfs^\perp$, we have to multiply the above matrix from the left with $P_{\bfs}$. We get
$$
\mathrm{NJ}(\phi,\bfy) = \frac{1}{y_1^2+y_2^2+1} \cdot \sqrt{\det M^TM}, \; \text{ where } M= P_{\bfs}  \begin{bmatrix}
1 & 0  \\
0 & 1\\
0 & 0
\end{bmatrix}.
$$
We have $\sqrt{\det M^TM}=\vert \langle \bfs , \bfe_3 \rangle \vert $. This implies that the probability density of $\bfu$ is given by $$
\frac{1}{(b-a)(d-c)} \cdot \frac{1}{\mathrm{NJ}(\phi,\bfy) }=\frac{1}{(b-a)(d-c)}\cdot \frac{u_1^2+u_2^2+u_3^2}{u_3^2}  \cdot \frac{1}{\cos{\alpha}},
$$
where $\alpha$ is the angle between the lines through $\bfu$ and $\bfe_3$. 
 
Let us write $g(\bfu):=\frac{u_1^2+u_2^2+u_3^2}{u_3^2}  \cdot \frac{1}{\cos{\alpha}}$. If for $1\leq i\leq 5$ we choose independently~$\bfu_i$ from the box $[a_i,b_i]\times[c_i,d_i]$ and $\bfv_i$ from the box $[a_i',b_i']\times[c_i',d_i']$ we obtain the density $\Psi(\bfp)$ with
 $$\Psi(\bfp) = \frac{g(\bfu_1)}{(b_1-a_1)(d_1-c_1)}\cdots \frac{g(\bfu_5)}{(b_5-a_5)(d_5-c_5)}\cdot \frac{g(\bfv_1)}{(b_1'-a_1')(d_1'-c_1')}\cdots \frac{g(\bfv_5)}{(b_5'-a_5')(d_5'-c_5')},$$
 when $\bfp$ is in the product of boxes, and $\Psi(\bfp)=0$ otherwise.
 \exampleend
\end{example}   

\bigskip

We will also denote $\Psi:(\R^3\setminus\{0\})^{\times 10}\to\R$ defined by $\Psi(\bfu_1,\ldots,\bfv_5):=\Psi(\pi(\bfu_1),\ldots,\pi(\bfv_5)),$ where $\pi:\R^3\setminus\{0\}\to\realproj^2$ is the projection.
It will be convenient to replace the uniform random variables in $\realproj^2$ by Gaussian random variables in $\mathbb R^3$, see \cite[Remark 2.24]{condition}:
\begin{equation}\label{eq4}
\mu = \mean\limits_{\bfu_1,\bfv_1,\ldots, \bfu_5,\bfv_5\sim N(0,\mathrm{1}) \text{ i.i.d.} } \,\Psi(\bfp)\cdot \#\{E\in\mathcal E \mid \bfu_{1}^T E \bfv_{1}=\cdots =\bfu_{5}^T E \bfv_{5}=0\}.
\end{equation}
Again, $\mean_{L\sim \psi} \# (\mathcal E\cap L)$ is recovered by setting $\Psi(\bfp)=1$ in \cref{eq4}.
We denote the Gaussian density 
by $\Phi(\bfp)=(2\pi)^{-15} \exp(-\tfrac{1}{2} \sum_{i=1}^5\Vert \bfu_i\Vert^2 + \Vert \bfv_i\Vert^2 )$. 

The proof of \cref{main2} consists of three steps, separated into three subsections. {In the initial two subsections, our objective is to calculate the normal Jacobian and apply the coarea formula. However, in this process, we do not arrive at an explicit or practical form. Following that in the final subsection, we adopt an alternative approach that involves a new parametrization. This transformation allows us to obtain a closed-form expression for \cref{main2}.}

\subsection{The incidence variety} 
The incidence variety is
$$\mathcal I := \{(\bfp,E)\in  ({\mathbb R^3\setminus \{0\}})^{\times 10}\times\mathcal E \mid \bfu_{1}^T E \bfv_{1}=\cdots =\bfu_{5}^T E \bfv_{5}=0\}.$$
This is a real algebraic subvariety of $({{\mathbb R^3\setminus \{0\}}})^{\times 10}\times\mathcal E$.  Recall from \cref{lemma:invariance_phi} that $\mathrm{SO}(3)\times \mathrm{SO}(3)$ acts transitively on $\mathcal E$ by left and right multiplication. This extends to a group action on $\mathcal I$ via 
$(U,V).(\bfp,E) := (U\bfu_1,V\bfv_1,\ldots,U\bfu_5,V\bfv_5,\ UEV^T).$
Let $E_0:=E(\mathrm 1_3, \bfe_1)$ be as in \cref{def_E_0} and let us denote the quadric 
$$
q(\bfu,\bfv):=\bfu^T E_0\bfv =  \bfu^T\begin{bmatrix}0&0&0\\ 0&0&-1\\0&1&0\end{bmatrix}\bfv=-\det \begin{bmatrix} u_2 & u_3\\ v_2&v_3\end{bmatrix},
$$
where $\bfu=(u_1,u_2,u_3)^T$ and $\bfv=(v_1,v_2,v_3)^T$. We denote its zero set by 
$$Q = \{(\bfu,\bfv)\in\mathbb R^3 \times \mathbb R^3 \mid q(\bfu,\bfv) = 0\}.
$$
Since $\mathcal E$ is an orbit of the $\mathrm{SO}(3)\times \mathrm{SO}(3)$ action, 
$\mathcal I = \bigcup_{(U,V) \in \mathrm{SO}(3)\times \mathrm{SO}(3)} \; (U,V).(Q^{\times 5}\times \{E_0\}).$
Let us denote $\widetilde{Q}:=\{(\bfu,\bfv)\in Q\mid \bfu,\bfv\not \in \mathbb R\bfe_1\}$. This is a Zariski open subset of $Q$. Let
$$\widetilde{\mathcal I} := \bigcup_{(U,V) \in \mathrm{SO}(3)\times \mathrm{SO}(3)} \; (U,V).(\widetilde{Q}^{\times 5}\times \{E_0\}).$$
We prove that $\widetilde{\mathcal I}$
is smooth by showing that the Jacobian matrix of the system of equations $\bfu_i^T{E}\bfv_i=0,$ for $i=1,\ldots,5$ has full rank at every point in $\widetilde{\mathcal I}$; see, e.g., \cite[Theorem A.9]{condition}.

The Jacobian matrix of $q$ is the $1\times 6$ matrix
$J(\bfu,\bfv) := \begin{bmatrix} 0 & -v_3 & v_2& 0 & u_3 & -u_2\end{bmatrix}$. Denote
\begin{equation}\label{def_A}
A:=\begin{bmatrix}
J(\bfu_1,\bfv_1) & & & & \\
 & J(\bfu_2,\bfv_2)  & & & \\
 & & J(\bfu_3,\bfv_3)  & & \\
 & & & J(\bfu_4,\bfv_4)  & \\
 & & & & J(\bfu_5,\bfv_5)
\end{bmatrix}\in\mathbb R^{5\times 30}.
\end{equation}
For $(\bfp, E_0)\in  \widetilde{\mathcal I}$ the matrix $A$ has full rank. Since the image of $A$ is contained in the image of the Jacobian matrix of $\bfu_i^T{E}\bfv_i=0, i=1,\ldots,5$, we see that the latter has full rank. Therefore, $\widetilde{\mathcal I}$ is smooth.

\subsection{Computing the normal Jacobian}

On $\mathcal I$ we have the two coordinate projections
$\pi_1: \mathcal I \to (\mathbb R^3\setminus\{0\})^{\times 10}$ and $\pi_2:\mathcal I\to \mathcal E$.
The projection $\pi_2$ is surjective, but $\pi_1$ is not, since out of the 10 complex solutions of the system of equations $\bfu_i^T{E}\bfv_i=0, i=1,\ldots,5$, 
there can be~0 real solutions. 
{Notice that $\operatorname{im}(\pi_1)$ is a full-dimensional semi-algebraic set.  
Let $\mathcal U$ be the interior of $\operatorname{im}(\pi_1)$. Then, $\mathcal U$ is an open set, hence measurable. Integrating over $\operatorname{im}(\pi_1)$ is the same as integrating over $\mathcal U$. 
  We therefore have, using \cref{eq4},}
\begin{equation}\label{eq5}
    \mu = \int_{\mathcal U} {\Psi(\bfp)}\, \#{\pi_1}^{-1}(\bfp)\;\Phi(\bfp)\;\mathrm d \bfp.
\end{equation}
Let us also denote $\widetilde{\mathcal U}:=\pi_1(\widetilde{\mathcal I})$. Consider a point $\bfp\in\mathcal U\setminus \widetilde{\mathcal U}$ and suppose that $(\bfp, E)\in\mathcal I$. Let $(U,V)\in \mathrm{SO}(3)\times \mathrm{SO}(3)$ such that $(U,V).E=E_0$. Since $\widetilde{Q}$ is Zariski open in $Q$, every neighborhood of~$(U,V).\bfp$ intersects $\widetilde{Q}$. Consequently, every neighborhood of $\bfp$ intersects $\widetilde{\mathcal U}$. This means that $\mathcal U'$ is open dense in $\mathcal U$ in the Euclidean topology. Hence, in \cref{eq5} we can replace $\mathcal U$ by $\widetilde{\mathcal U}$ to get
$$ \mu = \int_{\widetilde{\mathcal U}}  \Phi(\bfp)\cdot\Psi(\bfp)\cdot \#{\pi_1}^{-1}(\bfp)\;\mathrm d \bfp.$$
We have shown in the previous subsection that $\widetilde{\mathcal I}$ is a smooth manifold. We may therefore apply the coarea formula from \cref{coarea_formula} twice, first to $\pi_1$ and then to $\pi_2$, to get
\begin{align*}
    \mu &=\int_{\widetilde{\mathcal I}}\,\Phi(\bfp)\cdot\Psi(\bfp)\cdot \mathrm{NJ}(\pi_1,(\bfp, E))\; \mathrm d(\bfp, E)\\
&= \int_{\mathcal E}
\left(\int_{\pi^{-1}_{2}(E)}\, \Phi(\bfp)\cdot\Psi(\bfp)\cdot\frac{\mathrm{NJ}(\pi_1,(\bfp, E))}{\mathrm{NJ}(\pi_2,(\bfp,E))}\; \mathrm d\bfp\right)\;\mathrm d E.
\end{align*}
Let now $(U,V)\in\mathrm{SO}(3)\times\mathrm{SO}(3)$ such that $UEV^T = E_0$. It follows from \cref{lemma:invariance_phi} that $\pi_1,\pi_2$ are equivariant, which implies that $\mathrm{NJ}(\pi_{i},(\bfp,E)) = \mathrm{NJ}(\pi_{i}, (U,V).(\bfp,E)), i=1,2$. Furthermore, the Gaussian density $\Phi(\bfp)$ is also invariant under the $\mathrm{SO}(3)\times\mathrm{SO}(3)$ action. The fiber over~$E_0$ is
$\pi_2^{-1}(E_0) =  \widetilde{Q}^{\times 5}\times \{E_0\}$, which is open dense in $Q^{\times 5}\times \{E_0\}$. So,
\begin{equation}\label{eq1}
    \mu = \int_{\mathcal E}
    \left(\int_{Q^{\times 5}}\, \Phi(\bfp)\cdot\Psi((U,V).\bfp)\cdot\frac{\mathrm{NJ}(\pi_1,(\bfp, E_0))}{\mathrm{NJ}(\pi_2,(\bfp,E_0))}\; \mathrm d\bfp\right)\;\mathrm d E,
\end{equation}
where $(U,V)\in\mathrm{SO}(3)\times \mathrm{SO}(3)$ is such that $E=(U,V).E_0$.
The ratio of normal Jacobians is computed next. 

Recall from \cref{def_A} the definition of the matrix $A\in\R^{5\times 30}$.
For $B_1,\ldots,B_5$ the basis from \cref{prop_TS} we denote
$$B :=
\begin{bmatrix}
    \bfu_1^TB_1\bfv_1 & \cdots & \bfu_1^TB_5\bfv_1\\
    \vdots & \ddots & \vdots\\
    \bfu_5^TB_1\bfv_5 & \cdots & \bfu_5^TB_5\bfv_5
\end{bmatrix}
\in\mathbb R^{5\times 5}.
$$
Then, the tangent space of $\widetilde{\mathcal I}$ at $(\bfp, E_0)$ is defined by the linear equation
$A\dot \bfp + B\dot E=0$. 
Therefore, when~$B$ is invertible, $-B^{-1}A$ is a matrix representation for $\mathrm D_{(\bfp,E_0)}\pi_2\mathrm D_{(\bfp,E_0)}\pi_1^{-1}$ with respect to orthonormal bases. So,
\begin{equation}\label{eq2}
    \frac{\mathrm{NJ}(\pi_1,(\bfp,E_0))}{\mathrm{NJ}(\pi_2,(\bfp,E_0))} = \frac{1}{\vert\det(B^{-1}AA^TB^{-T})\vert} = \frac{\vert \det(B)\vert}{\sqrt{\det(AA^T)}}.
\end{equation}
When $B$ is not invertible, $\mathrm{NJ}(\pi_1,(\bfp,E_0))=0$ and the formula in \cref{eq2} also holds. 

\subsection{Integration on the quadric} 
We plug \cref{eq2} into \cref{eq1} and obtain
$$\mu 
=\int_{\mathcal E}
\left(\int_{Q^{\times 5}}\, \Phi(\bfp)\cdot\Psi((U,V).\bfp)\cdot\frac{\vert \det(B)\vert}{\sqrt{\det(AA^T)}}\; \mathrm d\bfp\right)\;\mathrm d E.$$
We denote $f(\bfu,\bfv):=u_2^2 + u_3^2 + v_2^2 + v_3^2$ for $\bfu=(u_1,u_2,u_3), \bfv=(v_1,v_2,v_3)$. Then,
$$\det(AA^T) = \prod_{i=1}^5 f(\bfu_i,\bfv_i).$$
We have $(\bfu,\bfv)\in Q$ if and only if $(u_2,u_3)$ is a multiple of $(v_2,v_3)$. Therefore, we have the following $2:1$ parametrization:
\begin{align*}
&\phi: \mathbb R^4\times [0,2\pi)  \to Q,\; (a,b,r,s,\theta)\mapsto (\bfu,\bfv),\\
&\text{ where } \bfu=(a, r\cdot \bfw)^T,\quad \bfv=(b, s\cdot \bfw)^T,\quad \bfw=(\cos\theta, \sin\theta)\in\S^1.
\end{align*}
The Jacobian matrix of $\phi$ is
$$J = \begin{bmatrix}
1&0 & 0&0&0\\
 0 &0& \cos\theta& 0&-r\sin\theta  \\
  0 &0& \sin\theta& 0&r\cos\theta  \\
  0&1& 0&0&0\\
 0 &0& 0 & \cos\theta& -t\sin\theta  \\
  0 &0& 0 & \sin\theta &t\cos\theta
\end{bmatrix}\in\mathbb R^{6\times 5}.$$
Then, $\mathrm{NJ}(\phi, (a,b,r,s,\theta)) = \sqrt{\det(J^TJ)}$ and
$$\det(J^TJ) = r^2 + s^2= u_2^2 + u_3^2 + v_2^2 + v_3^2 = f(\bfu,\bfv).$$ 
Let us denote $\bfa:=(a_i,b_i,r_i,s_i,\theta_i)_{i=1}^5$. We get:
\begin{equation}\label{eq6}
    \mu   = \frac{1}{2^5}\int_{\mathcal E}
    \left(\int_{(\mathbb R^4\times [0,2\pi))^{\times 5}}\, \Phi(\phi(\bfa))\cdot\Psi((U,V).\phi(\bfa))\cdot \vert \det(B)\vert\; \mathrm d\bfa\right)\;\mathrm d E.
\end{equation}
Notice that
$\Phi(\phi(\bfa)) = \tfrac{1}{(2\pi)^{5}}\,\tfrac{1}{(2\pi)^{10}}\,\exp(-\tfrac{1}{2}\sum_{i=1}^5 (a_i^2 + b_i^2 + r_i^2 + s_i^2))$ is the probability density, such that $a_i,b_i,r_i,s_i$ are all standard normal and $\theta_i$ is uniform in $[0,2\pi)$ for every $i$, and all variables are independent.
We can therefore rephrase \cref{eq6} as
$$\mu = \frac{1}{2^5}\int_{\mathcal E} \bigg(\mean_{a_i,b_i,r_i,s_i\sim N(0,1)}\; \mean_{\theta_i\sim \mathrm{Unif}([0,2\pi)), i=1,\ldots,5} \Psi((U,V).\phi(\bfa))\cdot \vert\det(B)\vert\bigg)\;\mathrm d E.$$
The rows of $B$ are all of the form
$$
\begin{bmatrix}
    \bfu^TB_1\bfv\\
    \bfu^TB_2\bfv\\
    \bfu^TB_3\bfv\\
    \bfu^TB_4\bfv\\
    \bfu^TB_5\bfv
\end{bmatrix} = \begin{bmatrix}
    \sqrt{2}\, u_3v_1\\
    \sqrt{2}\, u_2v_1\\
    \sqrt{2}\, u_1v_3\\
    \sqrt{2}\, u_1v_2\\
    \ u_2v_2+u_3v_3\
\end{bmatrix} = \begin{bmatrix}
    \sqrt{2}\cdot b\cdot r\cdot \sin\theta \\
    \sqrt{2}\cdot b\cdot r\cdot \cos \theta\\
    \sqrt{2}\cdot a \cdot s \cdot \sin\theta\\
    \sqrt{2}\cdot a \cdot s \cdot \cos\theta\\
    rs
\end{bmatrix}.
$$
This shows that $\vert \det(B)\vert \sim 4 \cdot \vert \det \begin{bmatrix} \bfz_1 & \ldots & \bfz_5\end{bmatrix}\vert$, where $\bfz_1,\ldots, \bfz_5\sim \bfz$ i.i.d.\ for
\begin{equation}
\label{def_z}
\bfz=
\begin{bmatrix}
b\cdot r\cdot \sin\theta \\
b\cdot r\cdot \cos \theta\\
a \cdot s \cdot \sin\theta\\
a \cdot s \cdot \cos\theta\\
rs
\end{bmatrix}, \quad a,b,r,s\sim N(0,1), \quad \theta\sim \mathrm{Unif}([0,2\pi)),\quad  \text{all independent}.
\end{equation}
We state a general integral formula.
\begin{theorem}\label{main3}
{With the notation above, we have that the expected value $\mu = \mean\#(\mathcal E\cap L)$, where the distribution of $L$ is defined by a nonnegative measurable function $\Psi:(\realproj^2)^{\times 10}\to \R$, is given by
$$\mu = \frac{\mathrm{vol}(\mathcal E)}{2^3}\mean_{E\in \mathcal E} \, \mean_{\bfa}\; \Psi((U,V).\phi(\bfa))\cdot \vert\det\begin{bmatrix} \bfz_1 & \ldots & \bfz_5\end{bmatrix}\vert,$$
where $(U,V)\in\mathrm{SO}(3)\times\mathrm{SO}(3)$ is such that $E=(U,V).E_0$, and the first expected value is over the uniform distribution in $\mathcal E$. The second expectedd value is for $\bfa=(a_i,b_i,r_i,s_i,\theta_i)_{i=1}^5$ with $a_i,b_i,r_i,s_i\sim N(0,1),\, \theta_i\sim \mathrm{Unif}([0,2\pi))$ and all are independent.}
\end{theorem}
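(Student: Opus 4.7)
The plan is to collect the results already established in the preceding three subsections and reassemble them into the compact form stated in \cref{main3}. The starting point is the expression
$$\mu = \frac{1}{2^5}\int_{\mathcal E} \mean_{\bfa}\bigl[\Psi((U,V).\phi(\bfa))\cdot |\det(B)|\bigr]\, \mathrm d E,$$
which was obtained by applying the coarea formula first to $\pi_1$ and then to $\pi_2$ on the smooth locus $\widetilde{\mathcal I}$, exploiting $\mathrm{SO}(3)\times\mathrm{SO}(3)$-equivariance to pull the integrand back to the fiber over $E_0$, passing through the $2{:}1$ parametrization $\phi$ of $Q$, and observing that the normal Jacobian $\sqrt{f(\bfu,\bfv)}$ of $\phi$ cancels the factor $\sqrt{\det(AA^T)}$ coming from $\mathrm{NJ}(\pi_1)/\mathrm{NJ}(\pi_2)$.

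Next, I would extract a clean scalar from $|\det(B)|$. Each row of $B$, evaluated at $(\bfu_i,\bfv_i)=\phi(a_i,b_i,r_i,s_i,\theta_i)$, has its first four entries equal to $\sqrt{2}$ times the corresponding entries of $\bfz_i$, while the fifth entry equals $r_is_i$. Factoring $\sqrt{2}$ out of four columns of $B$ yields $|\det(B)| = (\sqrt{2})^4 \cdot |\det[\bfz_1 \mid \cdots \mid \bfz_5]| = 4\cdot |\det[\bfz_1 \mid \cdots \mid \bfz_5]|$, so the prefactor in front of the integral becomes $4/2^5 = 1/2^3$.

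Finally, I would rewrite the outer integral as an expectation using $\int_{\mathcal E}(\cdot)\,\mathrm d E = \mathrm{vol}(\mathcal E)\cdot \mean_{E\sim\mathrm{Unif}(\mathcal E)}(\cdot)$. For each $E \in \mathcal E$ the inner integrand is independent of the particular $(U,V)\in\mathrm{SO}(3)\times\mathrm{SO}(3)$ chosen with $(U,V).E_0=E$, since that integrand depends on $(U,V)$ only through its action on $\bfp$, and the joint law of $\bfp$ together with the Gaussian density $\Phi(\bfp)$ is $\mathrm{SO}(3)\times\mathrm{SO}(3)$-invariant. Substituting produces the stated formula.

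The only real obstacle is careful bookkeeping --- tracking the factor $2^5$ from the $2{:}1$ parametrization of $Q^{\times 5}$, the factor $4$ from the $\sqrt{2}$ entries in each row of $B$, and being explicit about the reference measures on $\mathcal E$ and on the parameter space $(\R^4\times[0,2\pi))^{\times 5}$. No new mathematical idea is required beyond what was already developed in the preceding subsections.
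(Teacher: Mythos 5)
Your proposal is correct and follows essentially the same route as the paper: the theorem there is obtained by assembling the coarea-formula computation (their analogue of your starting identity with the $1/2^5$ prefactor), factoring $(\sqrt{2})^4=4$ out of the first four columns of $B$ to get $\vert\det B\vert = 4\,\vert\det[\bfz_1\ \cdots\ \bfz_5]\vert$, and rewriting $\int_{\mathcal E}\mathrm dE$ as $\mathrm{vol}(\mathcal E)\cdot\mean_{E\sim\mathrm{Unif}(\mathcal E)}$, giving the prefactor $4/2^5 = 1/2^3$. Your added remark on independence of the choice of $(U,V)$ is a harmless extra observation (in the paper it is implicit in the fact that the inner integral is the intrinsic fiber integral over $\pi_2^{-1}(E)$).
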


{We continue \cref{ex1} by computing the distribution and approximating the mean value.}
\begin{example}\label{ex11}
{As in \cref{ex1} we consider the case when the $\bfx_i$ and $\bfy_i$ are sampled i.i.d.\ from the box $[-5,5]\times [-5,5]\subset \mathbb R^2$. Figure \ref{fig:experiment3} shows the empirical distribution of the number of real zeros and an empirical mean of $\approx 3.788$. We could also approximate the average number of real zeros using \cref{main3}.} 

{We sample from the probability density $\Psi((U,V).\phi(\bfa))$ in \cref{main3} using the basic version of Metropolis-Hastings algorithm (see, e.g.,~\cite{Roberts2004}). For this, we use the proposal density for $(E,\bfa)$, such that $\bfa$ is as above and $E\in \mathcal E$ is uniform. We computed a corresponding Markov-Chain with $10^6$ states. The Metropolis-Hastings algorithm rejected all but 796 of those states. 
The empirical mean computed from the 796 states is~$\approx 3.5563$.
}
\begin{figure}
    \centering
    \includegraphics[width = 0.49\textwidth]{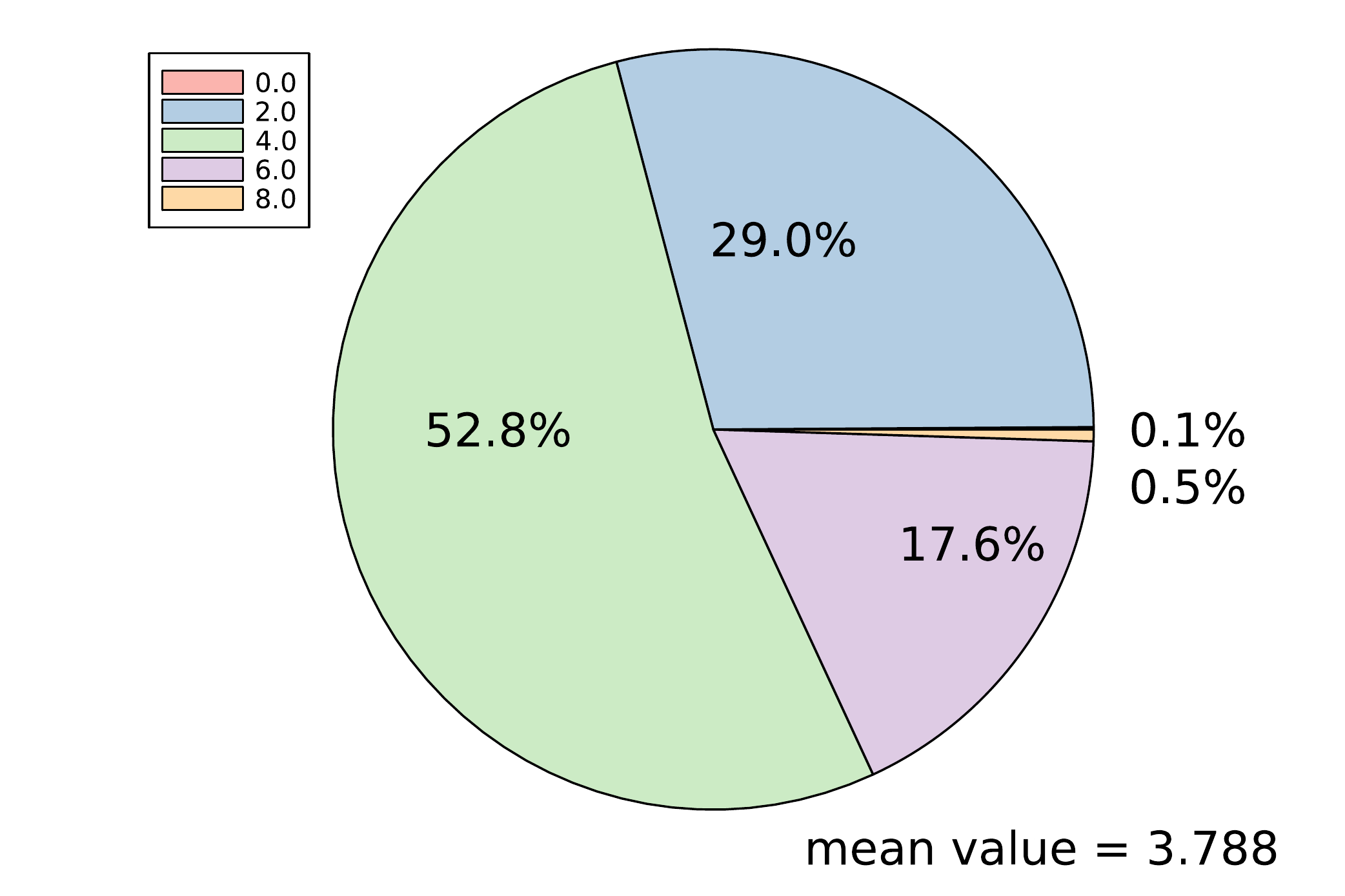}
    \caption{
    {The pie chart shows the outcome of the following experiment, similar to the experiments in Figure \ref{fig:exp}. We sampled $N=1000$ pairs $(\bfx_i,\bfy_i)_{i=1}^5$, where the $\bfx_i$ and $\bfy_i$ are sampled i.i.d.\ from the box $[-5,5]\times [-5,5]\subset \mathbb R^2$. Then, we computed $\mathcal E\cap L$ by solving the system of polynomial equations with the software \texttt{HomotopyContinuation.jl} \cite{HC.jl}. The chart shows the empirical distribution of real zeros and the corresponding empirical means in these experiments.}}
    \label{fig:experiment3}
\end{figure}
    
\end{example}

Let us now work towards proving \cref{main2}. In the setting of \cref{main2} we have $\Psi(\bfp)=1$ and thus, by \cref{main3},
$
\mean_{L\sim \psi} \# (\mathcal E\cap L)    = 2^{-3}\cdot \mathrm{vol}(\mathcal E)\cdot \mean  \left\vert\det \begin{bmatrix} \bfz_1 & \bfz_2&\bfz_3&\bfz_4 & \bfz_5\end{bmatrix}\right\vert.
$
We have shown in \cref{lem:volEssential} that $\vol(\mathcal E) = 4\cdot \mathrm{vol}(\realproj^5) = 2
\pi^3$. Consequently,
$$\mean_{L\sim \psi} \# (\mathcal E\cap L)  = \frac{\pi^3}{4} \cdot \mean  \left\vert\det \begin{bmatrix} \bfz_1 & \bfz_2&\bfz_3&\bfz_4 & \bfz_5\end{bmatrix}\right\vert$$
as stated in \cref{main2}. 

We close this section by giving a (extremely coarse) upper bound on the variance of the random determinant. This bound is used for applying Chebychev's inequality in the introduction.
\begin{proposition}\label{prop_var}
$\mathrm{Var}\left(\ \left\vert\det \begin{bmatrix} \bfz_1 & \bfz_2&\bfz_3&\bfz_4 & \bfz_5\end{bmatrix}\right\vert\ \right)\leq 360$.
\end{proposition}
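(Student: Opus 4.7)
The plan is to reduce the variance to a second moment and then bound the second moment by Hadamard's inequality together with the independence of the columns. Write $Z = \begin{bmatrix} \bfz_1 & \cdots & \bfz_5\end{bmatrix}$. Since $\bigl(\mean|\det Z|\bigr)^2 \geq 0$, we have
$$\mathrm{Var}\bigl(|\det Z|\bigr) \;\leq\; \mean\bigl[|\det Z|^2\bigr] \;=\; \mean\bigl[\det(Z)^2\bigr].$$
So it suffices to bound the second moment of $\det(Z)$.

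Hadamard's inequality applied to the columns of $Z$ gives $\det(Z)^2 \leq \prod_{i=1}^{5} \Vert \bfz_i\Vert^2$. Because the $\bfz_i$ are i.i.d.\ copies of $\bfz$, taking expectations and using independence splits the product into identical factors, so
$$\mean\bigl[\det(Z)^2\bigr] \;\leq\; \mean\Bigl[\prod_{i=1}^5 \Vert \bfz_i\Vert^2\Bigr] \;=\; \bigl(\mean \Vert \bfz\Vert^2\bigr)^5.$$
A direct expansion using $\sin^2\theta + \cos^2\theta = 1$ yields $\Vert \bfz\Vert^2 = b^2 r^2 + a^2 s^2 + r^2 s^2$, and since $a,b,r,s\sim N(0,1)$ are independent, $\mean b^2r^2 = \mean a^2 s^2 = \mean r^2 s^2 = 1$. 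Hence $\mean \Vert \bfz\Vert^2 = 3$ and
$$\mathrm{Var}\bigl(|\det Z|\bigr) \leq 3^5 = 243 \leq 360.$$

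There is no substantive obstacle in this approach; everything is a routine inequality, which is consistent with the paper's comment that $360$ is an extremely coarse bound. I would additionally include a short remark indicating how to tighten things considerably: a case check shows that the covariance matrix $\Sigma = \mean[\bfz\bfz^T]$ is diagonal, because each off-diagonal entry either contains the factor $\sin\theta\cos\theta$ (which has mean zero on $[0,2\pi)$) or an odd power of an independent standard normal. Expanding $\det(Z)^2$ via the Leibniz formula and collapsing the sum using the diagonality of $\Sigma$ then gives the exact identity $\mean[\det(Z)^2] = 5!\,\det(\Sigma) = 120\cdot(1/2)^4\cdot 1 = 15/2$, so in fact $\mathrm{Var}(|\det Z|) \leq 15/2$, well within the stated bound.
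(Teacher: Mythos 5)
Your proof is correct, and your main route differs from the paper's. The paper bounds $\mathrm{Var}(D)\leq \mean D^2$ (as you do) and then expands the determinant directly via the Laplace/Leibniz expansion, arguing that all mixed terms have expectation zero by the symmetry of $a,b,r,s$, which leads to its stated value $5!\cdot 3=360$. You instead invoke Hadamard's inequality, $\det(Z)^2\leq \prod_{i=1}^5\Vert\bfz_i\Vert^2$, and use independence of the columns to get $(\mean\Vert\bfz\Vert^2)^5=3^5=243\leq 360$; this is a perfectly valid and slightly sharper bound obtained with less bookkeeping, at the cost of saying nothing exact about $\mean D^2$. Your closing remark is in fact essentially the paper's own expansion argument carried out to its exact conclusion: since the columns are independent and the second-moment matrix $\Sigma=\mean[\bfz\bfz^T]=\mathrm{diag}(\tfrac12,\tfrac12,\tfrac12,\tfrac12,1)$ is diagonal, only the diagonal permutation pairs survive and $\mean[\det(Z)^2]=5!\,\det(\Sigma)=\tfrac{15}{2}$. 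Note that this exact identity shows the cancellation of cross terms yields $5!$ times the \emph{product} of the diagonal entries of $\Sigma$, not $5!$ times their sum $\mean\Vert\bfz\Vert^2=3$ as written in the paper's proof; so the paper's displayed equality $\mean D^2=360$ is really only a (very coarse) upper bound, while your computation gives the true second moment and hence $\mathrm{Var}(D)\leq \tfrac{15}{2}$, comfortably within the claimed $360$.
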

\begin{proof}
Let $D$ denote the random absolute determinant. We have $\mathrm{Var}(D)\leq \mean D^2$. Expanding the determinant with Laplace expansion, multiplying out the square, and taking the expected value we see that all mixed terms (that is, all terms which are not a square) average to 0 because the distributions of $a,b,r,s$ are symmetric around 0. This implies
$$\mean D^2 = 5!\cdot {\mean \left[(br\sin\theta)^2 + (br\cos\theta)^2 + (as\sin\theta)^2 + (as\cos\theta)^2 + (rs)^2 \right]}= 5! \cdot 3 = 360,$$
where we have used that $\mean_\theta \cos^2\theta = \mean_\theta \sin^2\theta = \tfrac{1}{2}$.
\end{proof}

\medskip

\section{The essential zonoid}\label{sec_zonoid}
Vitale \cite{Vitale} showed that the expected absolute determinant of a random matrix can be expressed as the volume of a convex body. More specifically, of a \emph{zonoid}. Zonoids are limits of zonotopes in the Hausdorff topology on the space of all convex bodies, and zonotopes are Minkowski sums of line segments; see \cite{schneider14} for more details.

Notice that the probability distribution of $\bfz$ from \cref{def_z} is invariant under multiplying by~$-1$; i.e., $\bfz\sim -\bfz$. In this case, based on Vitale's result, it was shown in \cite[Theorem 5.4]{BBLM2020} that~$\mean  \left\vert\det \begin{bmatrix} \bfz_1 & \bfz_2&\bfz_3&\bfz_4 & \bfz_5\end{bmatrix}\right\vert = 5!\cdot \mathrm{vol}(K)$, where $K\subset\mathbb R^5$ is the convex body with support function $h_K(\bfx) = \tfrac{1}{2}\mean \vert \langle \bfx,\bfz\rangle \vert$. So
\begin{equation}
\label{eq9}\mean_{L\sim \psi} \# (\mathcal E\cap L)  = 5!\cdot \frac{\pi^3}{4}\cdot \mathrm{vol}(K).
\end{equation}
We call $K$ the  \emph{essential zonoid}. 

In the remainder of this section, we bound $h_K(\bfx)$ from below to find a convex body whose volumes give a lower bound for~$\mathrm{vol}(K)$. This gives, using \cref{eq9}, the following result.
\begin{proposition}\label{thm: lower bound}
$\displaystyle \mean_{L\sim \psi} \# (\mathcal E\cap L) \geq  0.93$.
\end{proposition}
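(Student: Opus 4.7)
The plan is to construct an explicit convex body $\tilde K \subseteq K$ whose volume is computable, and to conclude via $\mathrm{vol}(K) \geq \mathrm{vol}(\tilde K)$ together with the identity $\mean_{L\sim\psi}\#(\mathcal E\cap L) = 30\pi^3\,\mathrm{vol}(K)$ from \cref{eq9}. The key observation is that, conditional on $(r,s,\theta)$, the random variable $X = \langle \bfx, \bfz\rangle$ is a Gaussian in the remaining variables $(a,b)$ with mean $\mu = x_5 rs$ and variance $\sigma^2 = (x_1\sin\theta+x_2\cos\theta)^2 r^2 + (x_3\sin\theta+x_4\cos\theta)^2 s^2$. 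Combining the standard Gaussian bounds $\mean|Y|\geq |\mu|$ (Jensen) and $\mean|Y|\geq \sigma\sqrt{2/\pi}$ (worst case $\mu = 0$) in a convex combination $\mean_{a,b}|X| \geq \lambda|\mu| + (1-\lambda)\sigma\sqrt{2/\pi}$ with $\lambda\in[0,1]$, and then integrating over $(r,s,\theta)$, yields a closed-form lower bound on $h_K(\bfx) = \frac{1}{2}\mean|X|$.

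The $\mu$-term integrates easily: $\lambda\,\mean|x_5 rs| = (2\lambda/\pi)\,|x_5|$. For the $\sigma$-term one needs $\mean_{r,s,\theta}\sqrt{p^2 r^2 + q^2 s^2}$ with $p = x_1\sin\theta+x_2\cos\theta$, $q = x_3\sin\theta+x_4\cos\theta$. Passing to polar coordinates in $(r,s)$ reduces this to $\sqrt{2/\pi}\,\mean_\theta\!\int_0^{\pi/2}\!\sqrt{p^2\cos^2\phi+q^2\sin^2\phi}\,d\phi$. The inner integral is bounded below by $\sqrt{p^2+q^2}$ using the pointwise inequality $\sqrt{a^2+b^2}\geq \max(|a|,|b|)$ together with the identity $\int_0^{\pi/2}\max(|\cos\phi\cos\alpha|,|\sin\phi\sin\alpha|)\,d\phi = \cos^2\alpha+\sin^2\alpha = 1$ (split at $\phi = \pi/2-\alpha$). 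The same trick applied to the outer $\theta$-integration then gives $\mean_\theta\sqrt{p^2+q^2}\geq (2/\pi)\sqrt{x_1^2+x_2^2+x_3^2+x_4^2}$. Combining all contributions yields
\[
h_K(\bfx) \,\geq\, c_1\sqrt{x_1^2+x_2^2+x_3^2+x_4^2} + c_2|x_5|
\]
for explicit constants $c_1, c_2>0$ depending on $\lambda$.

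This lower bound is the support function of $\tilde K = c_1 B^4 + c_2[-\bfe_5, \bfe_5]$, where $B^4$ is the unit Euclidean ball in the $(x_1,x_2,x_3,x_4)$-subspace. Because the two summands lie in orthogonal subspaces, $\tilde K$ is a Cartesian product, so $\mathrm{vol}(\tilde K) = (\pi^2/2)\,c_1^4\cdot 2c_2 = \pi^2 c_1^4 c_2$. Substituting into \cref{eq9} gives $\mean_{L\sim\psi}\#(\mathcal E\cap L) \geq 30\pi^5 c_1^4 c_2$, and optimizing over $\lambda\in[0,1]$ should produce the stated bound.

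The main obstacle is making the constants sharp enough to reach $0.93$; losses accumulate from (i) replacing $\max(|\mu|, \sigma\sqrt{2/\pi})$ by a convex combination (potentially losing a factor of two) and (ii) the elliptic-integral bounds. If the coarse estimates above do not quite suffice, one can sharpen them by retaining the exact Gaussian formula $\mean|Y| = \sigma\sqrt{2/\pi}\,e^{-\mu^2/(2\sigma^2)} + \mu\,\mathrm{erf}(\mu/(\sigma\sqrt 2))$ before averaging, by inscribing a more refined body (for instance a Minkowski sum of two $2$-disks in the $(x_1,x_2)$- and $(x_3,x_4)$-planes together with the segment along $\bfe_5$, or an ellipsoid derived from a fourth-moment Paley--Zygmund bound $\mean|X|\geq (\mean X^2)^{3/2}/\sqrt{\mean X^4}$ after bounding $\mean X^4$ in terms of $R = \sqrt{x_1^2+x_2^2}$, $S = \sqrt{x_3^2+x_4^2}$, $|x_5|$), or by optimizing $\lambda$ direction-wise and taking the envelope of the resulting family of inscribed bodies.
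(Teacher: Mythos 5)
There is a genuine gap: the concrete bound your argument produces falls well short of $0.93$, and the step you defer to (``optimizing over $\lambda$ should produce the stated bound'') does not work out. Your chain of estimates gives $h_K(\bfx)\geq \tfrac{\lambda}{\pi}\vert x_5\vert + (1-\lambda)\tfrac{2}{\pi^2}\sqrt{x_1^2+x_2^2+x_3^2+x_4^2}$, i.e.\ $c_1=(1-\lambda)\tfrac{2}{\pi^2}$, $c_2=\tfrac{\lambda}{\pi}$, so your cylinder yields $\mean_{L\sim\psi}\#(\mathcal E\cap L)\geq 30\pi^5c_1^4c_2=\tfrac{480}{\pi^4}(1-\lambda)^4\lambda$, which is maximized at $\lambda=\tfrac15$ with value $\tfrac{480}{\pi^4}\cdot\tfrac{256}{3125}\approx 0.40$. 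The loss comes precisely from using the Minkowski/convex combination $\lambda A+(1-\lambda)B$ of the two bodies certified by your two bounds instead of $\operatorname{conv}(A\cup B)$. The fallback fixes are not worked out, and one of them is unjustified as stated: you cannot inscribe a Minkowski sum of two disks and the segment along $\bfe_5$ from separate bounds $h_K\geq\tfrac{2}{\pi^2}\rho_1$, $h_K\geq\tfrac{2}{\pi^2}\rho_2$, $h_K\geq\tfrac1\pi\rho_3$, because the support function of a Minkowski sum is the \emph{sum} of the support functions, which these bounds do not give. There is also a flaw in the justification of $\mean_\theta\sqrt{p^2+q^2}\geq\tfrac{2}{\pi}\sqrt{x_1^2+x_2^2+x_3^2+x_4^2}$: the ``same trick'' relies on the two oscillating factors being in quadrature (as $\cos\phi$ and $\sin\phi$ are), but as functions of $\theta$ the phases of $p$ and $q$ are determined by $\bfx$ and can coincide, in which case the $\max$ bound only gives $\tfrac{2}{\pi}\max(\rho_1,\rho_2)$. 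The inequality itself is true: writing $p^2+q^2=\tfrac{\Sigma}{2}-\tfrac{A}{2}\cos(2\theta+\beta)$ with $\Sigma=x_1^2+\cdots+x_4^2$ and $A=\vert\rho_1^2e^{2i\gamma_1}+\rho_2^2e^{2i\gamma_2}\vert\leq\Sigma$, the average $\mean_\theta\sqrt{p^2+q^2}$ is decreasing in $A$ and equals $\tfrac{2}{\pi}\sqrt{\Sigma}$ at $A=\Sigma$; but this needs to be said, it does not follow from your quadrature identity.

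For comparison, the paper reaches $0.93$ by a more refined construction: besides the linear bounds it proves $h_K\geq F(\rho_1,\rho_3)$ and $h_K\geq F(\rho_2,\rho_3)$ with $F$ an elliptic-integral expression, reduces to the three norm coordinates $(\rho_1,\rho_2,\rho_3)$ via a coarea argument, inscribes a polytope whose vertices are certified numerically, and evaluates the resulting integral with \texttt{Mathematica}. Your framework could in fact be repaired more cheaply than your listed fallbacks suggest: keep the two bounds $h_K(\bfx)\geq\tfrac1\pi\vert x_5\vert$ (the $\lambda=1$ case) and $h_K(\bfx)\geq\tfrac{2}{\pi^2}\sqrt{x_1^2+\cdots+x_4^2}$ (the $\lambda=0$ case, once the $\theta$-averaging step is proved as above), take their pointwise maximum, and inscribe $\operatorname{conv}\bigl(\tfrac{2}{\pi^2}B^4\cup\tfrac1\pi[-\bfe_5,\bfe_5]\bigr)$, a double cone of volume $\tfrac{2}{5}\cdot\tfrac{\pi^2}{2}\bigl(\tfrac{2}{\pi^2}\bigr)^4\cdot\tfrac1\pi=\tfrac{16}{5\pi^7}$; this gives $\mean_{L\sim\psi}\#(\mathcal E\cap L)\geq\tfrac{96}{\pi^4}\approx 0.99\geq 0.93$, a cleaner route than the paper's, but it is not the argument you wrote down.
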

{It is important to note that the mentioned lower bound involves numerical computations in its calculation.}
\begin{remark}
The value of $0.93$ is not close to the experimental value of $3.95$ from the introduction. To get a lower bound closer to $3.95$ one would need to understand the support function of $K$ at points $\bfx=(x_1,\ldots,x_5)\in\mathbb R^5$, where all entries are nonzero. In the computation below we always either have $x_1=x_2=0$ or $x_3=x_4=0$. For such points we can work with the function that maps $\bfx$ to the vector of norms $\boldsymbol \rho=(\rho_1,\rho_2,\rho_3)$, where~$\rho_1=\sqrt{x_1^2+x_2^2}, \rho_2 = \sqrt{x_3^2+x_4^2}$ and $\rho_3 = \vert x_5\vert$. However, if all entries of $\bfx$ are nonzero, also the angle between the two points~$(x_1,x_2),(x_3,x_4)\in\mathbb R^2$ will play a role, not just their norms. We were not able to find a lower bound for $h_K(\bfx)$ in this case. We nevertheless prove \cref{thm: lower bound} for completeness.
\end{remark}

We will need the following lemma.
\begin{lemma}\label{lem_expected_values}
We have 
\begin{enumerate}
\item $\displaystyle\mean_{\xi\sim N(0,\sigma^2)} \vert\xi\vert =\sigma \sqrt{\tfrac{2}{\pi}}$;
\item  
$\displaystyle\mean_{\theta\sim \mathrm{Unif}([0,2\pi))} \vert\cos\theta\vert = \tfrac{2}{\pi}.$
\end{enumerate}
\end{lemma}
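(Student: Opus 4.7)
The plan is to compute both expected values directly from their definitions as one-dimensional integrals, as both are standard and admit elementary closed forms.

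For part (1), I would write
\[
\mean_{\xi\sim N(0,\sigma^2)} \vert\xi\vert = \int_{-\infty}^{\infty} \vert\xi\vert\,\frac{1}{\sigma\sqrt{2\pi}}\,e^{-\xi^2/(2\sigma^2)}\,\mathrm d\xi,
\]
use the symmetry of the Gaussian density about $0$ to restrict to $[0,\infty)$ (picking up a factor of $2$), and then substitute $u=\xi^2/(2\sigma^2)$ so that $\mathrm du = \xi/\sigma^2\,\mathrm d\xi$. The integral reduces to $\int_0^\infty e^{-u}\,\mathrm du = 1$ times a constant prefactor, yielding $\sigma\sqrt{2/\pi}$.

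For part (2), by the $\pi$-periodicity of $\vert\cos\theta\vert$ and symmetry about $\pi/2$, I would reduce the integral over $[0,2\pi)$ to four copies of the integral over $[0,\pi/2]$:
\[
\mean \vert\cos\theta\vert = \frac{1}{2\pi}\int_0^{2\pi} \vert\cos\theta\vert\,\mathrm d\theta = \frac{4}{2\pi}\int_0^{\pi/2}\cos\theta\,\mathrm d\theta = \frac{2}{\pi}\bigl[\sin\theta\bigr]_0^{\pi/2} = \frac{2}{\pi}.
\]

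Neither step presents a real obstacle; both are textbook computations. The only care needed is tracking the normalization constants correctly (the $\frac{1}{\sigma\sqrt{2\pi}}$ for the Gaussian density and the $\frac{1}{2\pi}$ for the uniform density on $[0,2\pi)$).
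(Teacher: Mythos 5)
Your computations are correct and follow essentially the same route as the paper: symmetry of the integrand plus an elementary antiderivative for each part (the paper does the Gaussian case for $\sigma=1$ and then rescales, while you carry $\sigma$ through the substitution, which is an immaterial difference). Nothing to add.
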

\begin{proof}
The first formula is proved by using 
$\mean_{\xi\sim N(0,1)} \vert\xi\vert = 2\int_{0}^\infty x \cdot  \tfrac{1}{\sqrt{2\pi}}e^{-\frac{1}{2}x^2}\;\mathrm d x = \sqrt{\tfrac{2}{\pi}},$ and then $\mean_{\xi\sim N(0,\sigma^2)} \vert\xi\vert =\sigma \mean_{\xi\sim N(0,1)} \vert\xi\vert $.
The second is
$\mean \vert\cos\theta\vert = 4\int_0^{\frac{\pi}{2}} \cos\theta \cdot \tfrac{1}{2\pi}\; \mathrm d \theta = \tfrac{2}{\pi}.$
\end{proof}

Let us have a closer look at the support function. 
\begin{align*}  
h_K(\bfx) 
&= \frac{1}{2}\mean \vert\langle \bfx, \bfz\rangle \vert\\[0.5em]
&= \frac{1}{2}\mean \vert br(x_1 \sin\theta + x_2 \cos\theta) + as(x_3 \sin\theta + x_4 \cos\theta) + x_5rs\vert\\[0.5em]
&= \frac{1}{2}\mean \left\vert
\begin{bmatrix} a & r \end{bmatrix} \, C \, \begin{bmatrix} b \\s \end{bmatrix}\right\vert,\end{align*}
where $C$ is the $2\times 2$ matrix
\[C := \begin{bmatrix}
    0 & x_3 \sin\theta + x_4 \cos\theta \\
    x_1 \sin\theta + x_2 \cos\theta & x_5
    \end{bmatrix}.
\]
Let $\sigma_1\geq\sigma_2\geq 0$ denote the two singular values of $C$. The Gaussian vectors $(a,r)$ and $(b,s)$ are invariant under rotations. Therefore, $h_K(\bfx) = \frac{1}{2}\mean \vert \sigma_1ab + \sigma_2rs\vert$. The law of adding Gaussians implies that for fixed $a,r$ and random $b,s$ we have $\sigma_1 ab + \sigma_2 rs\sim N(0, \sigma_1^2a^2 + \sigma_2^2r^2)$. We now keep $a,r$ fixed and take the expectation with respect to~$b,s$. This gives, using the first formula from \cref{lem_expected_values},
\begin{equation}\label{support_fct}
h_K(\bfx)  = \frac{1}{\sqrt{2\pi}}\,\mean_{a,r,\theta} \sqrt{\sigma_1^2a^2 + \sigma_2^2r^2}.\end{equation}

For $\bfx\in\R^5$ let us write 
$$\rho_1:=\sqrt{x_1^2+x_2^2}, \quad \rho_2:=\sqrt{x_3^2+x_4^2}\quad \text{ and }\quad \rho_3:=\vert x_5\vert.$$
From \cref{support_fct} we have $h_K(\bfx)   \geq \frac{1}{\sqrt{2\pi}}\,\mean_{a,\theta} \vert\sigma_1a\vert$ as $\sigma_2^2r^2\geq 0$. Since $\sigma_1$ does not depend on $a$ and $a,\theta$ are independent, this gives $h_K(\bfx)\geq \frac{1}{\sqrt{2\pi}}\,\mean_{a} \vert a\vert\mean_\theta \vert\sigma_1\vert$. Using \cref{lem_expected_values} we get
$$h_K(\bfx) \geq \frac{1}{\pi}\,  \mean_{\theta} \sigma_1.$$
The larger singular value $\sigma_1$ can be expressed as
$$\sigma_1 = \max_{\bfa,\bfb\in\mathbb R^2: \Vert\bfa\Vert = \Vert\bfb\Vert = 1} \bfa^T\, C\, \bfb.$$
Therefore,
$$h_K(\bfx) \geq \frac{1}{\pi}\, \mean_\theta \vert\bfe_2^T\,C\,\bfe_1\vert = \frac{1}{\pi}\, \mean_\theta \vert x_1\sin\theta + x_2\cos\theta\vert = \frac{2}{\pi^2}\cdot\rho_1;$$
the last equality by rotational invariance and \cref{lem_expected_values}. Similarly,
$h_K(\bfx) \geq \tfrac{2}{\pi^2}\rho_2,$ and also~$h_K(\bfx) \geq \tfrac{1}{\pi} \rho_3$.

We recall the definition of the \emph{elliptic integral of the second kind}
$$\mathrm E(m):=\int_0^{\frac{\pi}{2}} \sqrt{1-m\sin^2\theta}\;\mathrm d\theta {,}$$
and define
$$F(x,y) := \tfrac{2}{\pi^2}\cdot \sqrt{x^2+y^2}\cdot \mathrm E\left(\tfrac{x^2}{x^2 +y^2}\right).$$
Then, we have
\begin{align*}h_K(\bfx) \geq \frac{1}{\pi}\, \mean_\theta \Vert {C}^T\bfe_2\Vert
    &= \frac{1}{\pi}\, 
    \mean_\theta \sqrt{(x_1\sin\theta + x_2\cos \theta)^2 + x_5^2}\\ &= \frac{1}{\pi}\, \mean_\theta\sqrt{\rho_1^2\cos^2\theta  + \rho_3^2}\\
    &=\frac{1}{2\pi^2} \int_0^{2\pi} \sqrt{\rho_1^2\cos^2\theta  + \rho_3^2}\;\mathrm d\theta \\
    &=\frac{2}{\pi^2}\cdot \int_0^{\frac{\pi}{2}} \sqrt{\rho_1^2\cos^2\theta  + \rho_3^2}\;\mathrm d\theta = 
    F(\rho_1,\rho_3){.}
\end{align*} 
Similarly, we have
$h_K(\bfx) \geq F(\rho_2,\rho_3).$ 

\begin{figure}
\begin{tikzpicture}[scale=3]
\coordinate (a1) at (0,0,0);
\coordinate (a2) at (1,0,0);
\coordinate (a3) at (0,1,0);
\coordinate (a4) at (0,0,1);
\coordinate (a5) at (0.73, 0, 0.73);
\coordinate (a6) at  (0, 0.72, 0.72);
\coordinate (a7) at  (0.86, 0, 2*0.86/3);
\coordinate (a8) at  (0, 0.86, 2*0.86/3);
\coordinate (a9) at  (2*0.85/3, 0, 0.85);
\coordinate (a10) at  (0, 2*0.85/3, 0.85);
\coordinate (a11) at  (0.966, 0, 0.966/3);
\coordinate (a12) at  (0, 0.966, 0.966/3);
\coordinate (a13) at  (0.957/3, 0, 0.957);
\coordinate (a14) at  (0, 0.957/3, 0.957);
 
\draw[fill=teal!20] (a4) -- (a14) -- (a10) -- (a6) -- (a8) -- (a12) --(a3) -- (a2) -- (a11) -- (a7) -- (a5) -- (a9) -- (a13) -- (a4);

\draw (a1) node[below right] {$\mathbf{0}$} node{$\bullet$};
\draw (a2) node[above right] {$\bfe_1$} node{$\bullet$};
\draw (a3) node[above right] {$\bfe_2$} node{$\bullet$};
\draw (a4) node[above left] {$\bfe_3$} node{$\bullet$};
\draw (a5) node{$\bullet$};
\draw (a6) node{$\bullet$};
\draw (a7) node{$\bullet$};
\draw (a8) node{$\bullet$};
\draw (a9) node{$\bullet$};
\draw (a10) node{$\bullet$};
\draw (a11) node{$\bullet$};
\draw (a12) node{$\bullet$};
\draw (a13) node{$\bullet$};
\draw (a14) node{$\bullet$};

\draw[dashed, thick] (a1) -- (a2);
\draw[dashed, thick] (a1) -- (a3);
\draw[dashed, thick] (a1) -- (a4); 
\draw[thick] (a4) -- (a14) -- (a10) -- (a6) -- (a8) -- (a12) --(a3);
\draw[thick] (a4) -- (a13) -- (a9) -- (a5) -- (a7) -- (a11) --(a2);
\draw[thick] (a5) -- (a6);
\draw[thick] (a2) -- (a3);
\draw[thick] (a7) -- (a8);
\draw[thick] (a9) -- (a10);
\draw[thick] (a11) -- (a12);
\draw[thick] (a13) -- (a14);

\draw[->, thick] (a2) -- (1.2,0,0);
\draw[->, thick] (a3) -- (0,1.2,0);
\draw[->, thick] (a4) -- (0,0,1.3);

\end{tikzpicture}
\caption{The polytope $P$ from \cref{def_P}.\label{fig2}}
\end{figure}

Let  $L'\subset \mathbb R^3$ be the convex body whose support function is 
$$h_{L'}(\boldsymbol\rho)=\max\left\{0,\ \tfrac{2}{\pi^2}\, \rho_1,\ \tfrac{2}{\pi^2}\, \rho_2,\ \tfrac{1}{\pi}\rho_3,\ 
F(\rho_1,\rho_3),\ F(\rho_2,\rho_3)\right\},$$
and define $\varphi: \mathbb R^5\to\mathbb R^3_{\geq 0},\; \bfx \mapsto \boldsymbol\rho$,
and
$$L:=L'\cap \mathbb R^3_{\geq 0}.$$
We have thus shown that 
$h_K(\bfx)\geq h_{\varphi^{-1}(L)}(\bfx).$
Since
\begin{equation}\label{support_fct_int}
K = \bigcap_{\bfx \in \mathbb R^5 \setminus \{0\}} \, \{\bfy\in\mathbb R^5 \mid \langle \bfx,\bfy\rangle \leq h_K(\bfx)\},
\end{equation}
this means $\varphi^{-1}(L)\subset K$.

For every point $\bfx\in\mathbb R^5$ we have $\mathrm{NJ}(\varphi,\bfx)=\rho_1\cdot \rho_2$. 
For a fixed $\boldsymbol\rho\in\mathbb R^3$ the fiber $\varphi^{-1}(\boldsymbol\rho)$ consists of the product of two circles (all points $\bfx$ with $\sqrt{x_1^2+x_2^2}=\rho_1$ and $\sqrt{x_3^2+x_4^2}=\rho_1$) and two points ($-x_5$ and~$x_5$). Therefore, the fibers of $\varphi$ have volume $2 \mathrm{vol}(\S^1)^2 = 2(2\pi)^2$. Then, by the coarea formula \cref{coarea_formula},
\begin{equation}\label{eq8}\mathrm{vol}(K)\geq \mathrm{vol}(\varphi^{-1}(L)) = \int_{\mathbb R^5} \delta_{\varphi^{-1}(L)}(\bfx)\;\mathrm d\bfx = 2(2\pi)^2 \cdot \int_L \rho_1\cdot \rho_2\;\mathrm d\boldsymbol \rho,
\end{equation}
where $\delta_{\varphi^{-1}(L)}$ is the indicator function of the interior of $\varphi^{-1}(L)$.

We have $\mathbf{0}\in L$. Since $\langle \tfrac{2}{\pi^2}\bfe_1, \boldsymbol\rho\rangle = \tfrac{2}{\pi^2} \rho_1  \leq h_L(\boldsymbol\rho)$ for all $\boldsymbol\rho\neq \mathbf{0}$, we also have, by \cref{support_fct_int}, 
$$\bfp_1 := \tfrac{2}{\pi^2}\,\bfe_1\in L\quad \text{ and, similarly, }\quad \bfp_2:=\tfrac{2}{\pi^2}\,\bfe_2\in L, \quad \bfp_3:=\tfrac{1}{\pi}\,\bfe_3\in L.$$
Using \texttt{Mathematica} \cite{Mathematica} we prove that $$\lambda_1(\bfp_i+\bfp_3),\; \lambda_2(\bfp_i+\tfrac{2}{3}\bfp_3), \; \lambda_3(\tfrac{2}{3}\bfp_i+\bfp_3),\; \lambda_4(\bfp_i+\tfrac{1}{3}\bfp_3),\; \lambda_5(\tfrac{1}{3}\bfp_i+\bfp_3)\in L, \quad i=1,2,$$ 
where $\lambda_1=0.73, \lambda_2 = 0.86, \lambda_3 = 0.85, \lambda_4=0.966, \lambda_5 = 0.957.$ 

By convexity,~$L$ contains the convex hull of all these points. We define 
\begin{align}\label{def_P}
P\ :=\ & \operatorname{conv} \big(\{\mathbf 0, \,\bfe_1,\ \bfe_2,\ \bfe_3,\ \lambda_1(\bfe_1+\bfe_3)\} \\
&\cup \{\lambda_2(\bfe_i+\tfrac{2}{3}\bfe_3),\ \lambda_3(\tfrac{2}{3}\bfe_i+\bfe_3),\ \lambda_4(\bfe_i+\tfrac{1}{3}\bfe_3),\ \lambda_5(\tfrac{1}{3}\bfe_i+\bfe_3)\mid i=1,2\}\big)\nonumber
\end{align}
(see \cref{fig2}). 
Then, using the coarea formula \cref{coarea_formula} we have 
\begin{equation}\label{eq10}
\int_L\rho_1\cdot \rho_2\;\mathrm d\boldsymbol \rho\geq \left(\frac{2}{\pi^2}\right)^4 \cdot \frac{1}{\pi}\cdot \int_P\rho_1\cdot \rho_2\;\mathrm d\boldsymbol \rho.
\end{equation}
We evaluate this integral using \texttt{Mathematica} \cite{Mathematica}  and get $\int_P\rho_1\cdot \rho_2\;\mathrm d\boldsymbol\rho \geq 0.0236165$. 

\begin{proof}[Proof of \cref{thm: lower bound}]
By \cref{eq9}, we have $\mean_{L\sim \psi} \# (\mathcal E\cap L)  = 5!\cdot \frac{\pi^3}{4}\cdot \mathrm{vol}(K)$. Above we have shown 
$$\mathrm{vol}(K)\,\stackrel{\text{\cref{eq8}}}{\geq }\, 2(2\pi)^2 \cdot \int_L \rho_1\cdot \rho_2\;\mathrm d\boldsymbol \rho \, \stackrel{\text{\cref{eq10}}}{\geq}  \, \frac{2^7}{\pi^7}\cdot \int_P\rho_1\cdot \rho_2\;\mathrm d\boldsymbol \rho\geq \frac{2^7}{\pi^7}\cdot 0.0236165.$$ 
So, 
$\mean_{L\sim \psi} \# (\mathcal E\cap L)\geq 5! \cdot \frac{2^5}{\pi^4} \cdot 0.0236165  \geq 0.93$.
\end{proof}

The implementations of all numerical computations made in this contribution can be found:

\url{https://mathrepo.mis.mpg.de/average_degree/index.html}

\bigskip

\bibliographystyle{plain}
\bibliography{literatur.bib}

\end{document}